\newtheorem{thm}{Theorem}[section]
\newtheorem{lemma}[thm]{Lemma}
\newtheorem{prop}[thm]{Proposition}
\newtheorem{cor}[thm]{Corollary}
\theoremstyle{definition}
\theoremstyle{remark}
\newtheorem*{claim}{Claim}
\newcommand{\Homeo}{\mathop{\rm Homeo}}
\newcommand{\Diff}{\mathop{\rm Diff}}
\newcommand{\id}{\mathop{\rm id}}
\begin{document}

\author{Michael P. Cohen}
\address{Michael P. Cohen,
Department of Mathematics,
North Dakota State University,
PO Box 6050,
Fargo, ND, 58108-6050}
\email{michael.cohen@ndsu.edu}

\title{Polishability of some groups of interval and circle diffeomorphisms}

\begin{abstract}  Let $M=I$ or $M=\mathbb{S}^1$ and let $k\geq 1$.  We exhibit a new infinite class of Polish groups by showing that each group $\Diff_+^{k+AC}(M)$, consisting of those $C^k$ diffeomorphisms whose $k$-th derivative is absolutely continuous, admits a natural Polish group topology which refines the subspace topology inherited from $\Diff_+^k(M)$.  By contrast, the group $\Diff_+^{1+BV}(M)$, consisting of $C^1$ diffeomorphisms whose derivative has bounded variation, admits no Polish group topology whatsoever.
\end{abstract}

\maketitle


\section{Introduction}

We are motivated by the following question: let $M$ be a compact one-dimensional manifold (i.e. the interval $I$ or the circle $\mathbb{S}^1$) and let $G$ be a group of orientation-preserving homeomorphisms of $M$ defined by some smoothness assumption which is stronger than $C^k$ but weaker than $C^{k+1}$.  Then, is it possible to topologize $G$ in such a way that it becomes a Polish group\footnote{That is, a group endowed with a separable completely metrizable topology under which the group multiplication and inversion maps are continuous.}?

More concretely, let $\mathcal{A}$ be an algebra of real-valued continuous functions defined on $M$ which contains all $C^1$ maps, and which satisfies the following closure properties: (1) whenever $f\in A$ and $f$ is everywhere positive then $\frac{1}{f}\in\mathcal{A}$, and (2) whenever $f\in \mathcal{A}$ and $g\in\Diff_+^1(M)$ then $f\circ g\in\mathcal{A}$.  Then for any integer $k\geq 1$, by iterating the chain rule, product rule, and inverse rule for derivatives, it is straightforward to verify that the collection\\

\begin{center} $G=\{f\in\Diff_+^k(M):f^{(k)}\in\mathcal{A}\}$
\end{center}
\vspace{.3cm}

\noindent comprises a subgroup of $\Diff_+^k(M)$.  We list some significant examples of such algebras $\mathcal{A}$ below:\\

\begin{itemize}
		\item $CBV(M)$, the space of all continuous functions of bounded total variation;
		\item $AC(M)$, the space of all absolutely continuous functions;
		\item $C^{0,\epsilon}(M)$, the space of all H\"older continuous functions with exponent $0<\epsilon<1$; and
		\item $\mbox{\rm Lip}(M)$, the space of all Lipschitz continuous functions.
\end{itemize}
\vspace{.3cm}

The groups $G$ corresponding to each space above are respectively denoted: $\Diff_+^{k+BV}(M)$; $\Diff_+^{k+AC}(M)$; $\Diff_+^{k+\epsilon}(M)$; and $\Diff_+^{k+Lip}(M)$.  In all four cases the group $G$ satisfies $\Diff_+^{k+1}(M)\leq G\leq \Diff_+^k(M)$, where both containments are strict, and as dense subsets.  So these groups $G$ are in some sense natural interpolations between the $C^k$ diffeomorphism groups, which are well-known to be Polish groups in their respective $C^k$ topologies.  In the cases of $AC(M)$ and $\mbox{\rm Lip}(M)$, one can also drop below $C^1$ regularity, and define the groups $\Homeo_+^{AC}(M)$ and $\Homeo_+^{Lip}(M)$, consisting of those maps which together with their inverse lie in $AC(M)$ or $\mbox{\rm Lip}(M)$ respectively.  In this way we obtain groups which interpolate between the Polish groups $\Homeo_+(M)$ and $\Diff_+^1(M)$, i.e. between $C^0$ and $C^1$.  Such ``intermediate smoothness'' conditions have risen to prominence in the study of one-dimensional dynamical systems, because such assumptions have sharp consequences on the dynamics of maps satisfying them.  This connection traces back at least to the classical Denjoy theorem, which asserts that $C^{1+BV}$ circle diffeomorphisms with irrational rotation number do not admit exceptional minimal invariant sets (see \cite{navas_2011a} for this and many other results of a similar flavor).

In \cite{cohen_kallman_2015a}, Kallman and the author investigated the Polishability problem for several of the groups mentioned above, and it turns out that $\Homeo_+^{Lip}(M)$, $\Diff_+^{1+\epsilon}(M)$, and $\Diff_+^{1+Lip}(M)$ are all examples of a peculiar phenomenon: they are continuum-cardinality groups whose algebraic structure precludes the existence of any Polish group topology whatsoever\footnote{The group $\Diff_+^{1+Lip}(M)$ is not mentioned explicitly in \cite{cohen_kallman_2015a}, but the arguments given for $\Diff_+^{1+\epsilon}(M)$ hold just as well for $\epsilon=1$, which implies the result described.}.  This puts them in somewhat curious company, alongside free groups and free abelian groups on continuum-many generators (\cite{dudley_1961a}); the automorphism group of the category algebra of $\mathbb{R}$ (\cite{hjorth_2000a}); the homeomorphism groups of the rationals and the irrationals (\cite{rosendal_2005a}); and very few other known examples (although this list seems to be growing recently, see \cite{ibarlucia_melleray_2016a}, \cite{mann_2017a}, \cite{paolini_shelah_2017a}).

Contrariwise, Solecki has proven in \cite{solecki_1999a} that the group $\Homeo_+^{AC}(I)$ of absolutely continuous interval homeomorphisms, which have absolutely continuous inverse, does indeed carry a Polish group topology, and moreover this topology is strictly finer than the natural Polish topology inherited from $\Homeo_+(I)$, and strictly coarser than the natural Polish topology on $\Diff_+^1(I)$.  We remark here that if a group of homeomorphisms of $M=I$ or $M=\mathbb{S}^1$ is sufficiently rich in the sense that it is \textit{locally moving}\footnote{A subgroup $G$ of $\Homeo_+(M)$ is called locally moving if whenever $U\subseteq M$ is open, there is a non-identity $f\in G$ whose support is contained in $U$.}, and if it carries a Polish group topology, then this topology is necessarily unique by results of Kallman (\cite{kallman_1986a}), and it generates the same Borel $\sigma$-algebra as given by the subspace topology inherited from $\Homeo_+(M)$ (see \cite{cohen_kallman_2015a} Lemma 1.5).

Our first main result may be viewed as an extension of Solecki's theorem for $\Homeo_+^{AC}(I)$ to higher degrees of smoothness:

\begin{thm}[see Theorem \ref{thm_main}] \label{thm_1}  Let $k\geq 1$ and $M=I$ or $M=\mathbb{S}^1$.  Then the group $\Diff_+^{k+AC}(M)$ admits a Polish group topology.  This topology is strictly finer than that inherited as a subspace of $\Diff_+^k(M)$ in its usual topology, and its restriction to $\Diff_+^{k+1}(M)$ is strictly coarser than the usual topology on $\Diff_+^{k+1}(M)$.
\end{thm}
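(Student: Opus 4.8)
The plan is to produce the Polish topology on $G:=\Diff_+^{k+AC}(M)$ explicitly, by metrizing $G$ using the data $(f,f',\dots,f^{(k)})$ in the uniform norm together with the ($L^1$, a.e.\ defined) derivative $f^{(k+1)}$ of the absolutely continuous function $f^{(k)}$ in the $L^1$ norm, and — crucially — the same data for $f^{-1}$. Concretely, for $M=I$ set
\[
d(f,g)=\sum_{j=0}^{k}\|f^{(j)}-g^{(j)}\|_{\infty}+\|f^{(k+1)}-g^{(k+1)}\|_{L^1},\qquad \hat d(f,g)=d(f,g)+d(f^{-1},g^{-1}).
\]
For $M=\mathbb{S}^1$ one does the analogous thing after lifting to $\mathbb{R}$: write each diffeomorphism as $x\mapsto x+u(x)$ with $u$ $1$-periodic of class $C^{k+AC}$, add an $\mathbb{S}^1$-valued coordinate recording $\tilde f(0)\bmod 1$, and use the periodic versions of the norms above; the circle case is otherwise parallel, and I will treat it only briefly. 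First I would record the routine structural facts: $G$ is closed under inversion (the group law is given in the excerpt; for the inverse one checks that $(f^{-1})^{(k)}$ is a rational expression in $f',\dots,f^{(k)}$ composed with the bi-Lipschitz homeomorphism $f^{-1}$, hence of bounded variation with the Luzin $N$ property, hence absolutely continuous), and that $\hat d$ is a well-defined metric whose topology refines the one pulled back from $\Diff_+^k(M)$.

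The technical core is the verification that $(G,\hat d)$ is a topological group, and the single lemma doing the work is: \emph{if $h_n\to h$ in $L^1(M)$ and $g_n\to g$ in $\Diff_+^1(M)$ with $g_n'\to g'$ uniformly, then $h_n\circ g_n\to h\circ g$ in $L^1(M)$}. This follows by splitting $h_n\circ g_n-h\circ g=(h_n-h)\circ g_n+(h\circ g_n-h\circ g)$: the change of variables $y=g_n(x)$ and the uniform bound $g_n'\ge c>0$ give $\|(h_n-h)\circ g_n\|_{L^1}\le c^{-1}\|h_n-h\|_{L^1}$, while the second term expresses the strong convergence of the uniformly bounded operators $h\mapsto h\circ g_n$ on $L^1$, which one checks on continuous $h$ and extends by density. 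Granting this, continuity of composition is bookkeeping: expanding $(f\circ g)^{(k+1)}$ by the Fa\`a di Bruno formula, every summand is a product of some $f^{(j)}\circ g$ ($1\le j\le k+1$) with a polynomial in $g',\dots,g^{(k+1)}$; the orders of the $g$-derivatives in a summand sum to $k+1$, so a summand involving $g^{(k+1)}$ has companion factor $f'\circ g$, and the unique summand involving $f^{(k+1)}\circ g$ has companion factor $(g')^{k+1}$ — thus each summand is (uniformly convergent, bounded) $\times$ ($L^1$-convergent), which converges in $L^1$, the lemma handling the $f^{(k+1)}\circ g$ term. An identical expansion of $(f^{-1})^{(k+1)}$, together with the standard $C^k$-continuity of inversion in $\Diff_+^k$, makes inversion continuous; since $\hat d$ is symmetric under $f\mapsto f^{-1}$, continuity of multiplication at all pairs follows formally.

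Next I would prove completeness, which is the point of building $f^{-1}$ into $\hat d$. If $(f_n)$ is $\hat d$-Cauchy, then $f_n^{(j)}\to h_j$ uniformly for $j\le k$ and $f_n^{(k+1)}\to h_{k+1}$ in $L^1$; integrating gives $h_j'=h_{j+1}$ for $j<k$ and $h_k(x)=h_k(0)+\int_0^x h_{k+1}$, so $f:=h_0$ is of class $C^{k+AC}$, and likewise $f_n^{-1}\to\tilde f$ of class $C^{k+AC}$. Passing to the limit in $f_n\circ f_n^{-1}=\id=f_n^{-1}\circ f_n$ gives $f\circ\tilde f=\tilde f\circ f=\id$, and passing to the limit in $(f_n^{-1})'(f_n(x))\,f_n'(x)=1$ gives $f'(x)\neq 0$ everywhere; hence $f\in\Diff_+^{k+AC}(M)$ with $f^{-1}=\tilde f$, and $\hat d(f_n,f)\to 0$. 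Separability is immediate, since $f\mapsto(f,f^{-1},f^{(k+1)},(f^{-1})^{(k+1)})$ isometrically embeds $(G,\hat d)$ into a separable metric space; thus $(G,\hat d)$ is Polish. For the comparisons: $\hat d$ refines the $\Diff_+^k(M)$-subspace topology by definition, and it is \emph{strictly} finer because the smooth maps $f_n=\id+b_n^{-k-1/2}\sin(2\pi b_n x)$ ($b_n\in\mathbb{Z}$, $b_n\to\infty$) converge to $\id$ in $C^k$ but have $\|f_n^{(k+1)}\|_{L^1}\asymp b_n^{1/2}\to\infty$, hence do not $\hat d$-converge. Restricted to $\Diff_+^{k+1}(M)$, the $C^{k+1}$ topology refines $\hat d$ (uniform, hence $L^1$, convergence of $(k+1)$-st derivatives, likewise for inverses since inversion is $C^{k+1}$-continuous); it is \emph{strictly} finer because, for a fixed nonzero $\Phi\in C_c^\infty(0,1)$, the maps $f_n=\id+n^{-(k+1)}\Phi(nx)$ satisfy $\|f_n^{(j)}\|_\infty\to 0$ for $j\le k$ and $\|f_n^{(k+1)}\|_{L^1}=n^{-1}\|\Phi^{(k+1)}\|_{L^1}\to 0$ while $\|f_n^{(k+1)}\|_\infty=\|\Phi^{(k+1)}\|_\infty\not\to 0$, and the corresponding $L^1$ bounds for the inverses (via the change-of-variables lemma) give $\hat d(f_n,\id)\to 0$ though $f_n\not\to\id$ in $C^{k+1}$. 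I expect the main obstacle to be the honest verification of the composition lemma uniformly over the relevant family of $g$'s and the attendant Fa\`a di Bruno bookkeeping (plus the minor but real care for the circle's rotation-number coordinate); completeness and the strictness examples are comparatively routine once the topology is set up.
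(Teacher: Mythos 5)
Your proposal is correct, but it takes a genuinely different route from the paper at the technical heart. The paper works with the asymmetric metric $d=d_{C^k}+\|f^{(k+1)}-g^{(k+1)}\|_{L^1}$ only: completeness is obtained by importing a complete metric from the Polish group $\Diff_+^k(M)$, continuity of right translations is easy, joint continuity then comes from Montgomery's theorem, and the entire difficulty is concentrated in continuity of inversion, which the paper proves by showing $[(f_n')^{-1}f_n^{(k+1)}]\circ f_n^{-1}\to[(f_0')^{-1}f_0^{(k+1)}]\circ f_0^{-1}$ in $L^1$ via a Solecki-style argument (a lemma on images of small sets under a uniformly convergent family, Egorov, the Rudin fact that a.e.\ convergence plus convergence of $L^1$-norms gives $L^1$ convergence, and a diagonal subsequence extraction). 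You instead symmetrize the metric by adjoining the same data for $f^{-1}$, which makes inversion an isometry and makes completeness elementary (the limit is a diffeomorphism because the inverses converge too), and you shift all the work to joint continuity of composition, handled by your single lemma that $(h,g)\mapsto h\circ g$ is continuous $L^1\times\Diff_+^1\to L^1$ --- proved by density of continuous functions together with the uniform lower bound on $g_n'$ and change of variables. That lemma is correct, and notably it subsumes the paper's hard Claim (take $h_n=(f_n')^{-1}f_n^{(k+1)}$, $g_n=f_n^{-1}$, using that inversion is $C^1$-continuous), so your argument is genuinely more elementary; the heavier Egorov machinery is really forced only in Solecki's original $\Homeo_+^{AC}(I)$ setting, where the inner maps are merely absolutely continuous homeomorphisms with no derivative bound, whereas here they are $C^1$ diffeomorphisms with derivative bounded away from zero. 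You also avoid Montgomery's theorem by proving joint continuity outright, and you supply explicit oscillatory examples for both strictness assertions (strictly finer than the $C^k$ subspace topology; strictly coarser than $C^{k+1}$ on $\Diff_+^{k+1}$), which the paper asserts but does not spell out. Nothing is lost by using $\hat d$ rather than $d$: your topology satisfies all the stated properties, and a posteriori it coincides with the paper's (by the paper's $d$-continuity of inversion, or by Kallman's uniqueness of Polish topologies for locally moving groups). The remaining gaps are routine and at the same level of informality as the paper itself: the circle case via lifts, the a.e.\ Fa\`a di Bruno/chain-rule identities when the top derivative is only absolutely continuous, and the $L^1$ estimate for $(f_n^{-1})^{(k+1)}$ in your strict-coarseness example.
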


Thus we furnish infinitely many new examples of Polish groups $\Diff_+^{k+AC}(M)$ (a sequence corresponding to $M=I$ and another to $M=\mathbb{S}^1$).  The results of Solecki, Kallman and the author, and those of the present work seem to suggest that absolute continuity is the ``right'' intermediate smoothness condition to impose as opposed to Lipschitz/H\"older conditions or bounded variation, at least from the viewpoint of topological group theory.  We once again emphasize this contrast with our secondary result, which builds upon the work of \cite{cohen_kallman_2015a}:

\begin{thm}[see Theorem \ref{thm_main_2}] \label{thm_2}  Let $M=I$ or $M=\mathbb{S}^1$.  Then there is no Polish group topology on $\Diff_+^{1+BV}(M)$.
\end{thm}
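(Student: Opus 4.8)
The plan is to argue by contradiction via a subadditive ``length function''. For $f\in\Diff_+^{1+BV}(M)$ set $\ell(f)=\mathrm{Var}_M(\log f')$, the total variation of $\log f'$ over $M$. The identities $\ell(\id)=0$, $\ell(f^{-1})=\ell(f)$ (because $\log(f^{-1})'=-(\log f')\circ f^{-1}$ and total variation is unchanged under precomposition with a homeomorphism), and $\ell(fg)\le\ell(f)+\ell(g)$ (because $\log(fg)'=(\log f')\circ g+\log g'$ and $\mathrm{Var}$ is subadditive and invariant under precomposition) are immediate. Moreover $\ell$ takes only finite values on $G:=\Diff_+^{1+BV}(M)$, so $G=\bigcup_{n\ge1}B_n$ with $B_n=\{f:\ell(f)\le n\}$; and since $f\mapsto f'$ is continuous into $C(M)$ for the $C^1$ topology while $\mathrm{Var}\colon C(M)\to[0,\infty]$ is lower semicontinuous, $\ell$ is $C^1$-lower semicontinuous and each $B_n$ is closed in the $C^1$ topology. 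The decisive structural point, and the place where $BV$ behaves differently from the Lipschitz and H\"older conditions of \cite{cohen_kallman_2015a}, is that the $B_n$ are \emph{not} compact: a bound on $\mathrm{Var}(\log f')$ does not make the family $\{f'\}$ equicontinuous, so in every coordinate neighborhood there are diffeomorphisms with $\ell$ as large as we please yet arbitrarily close to $\id$ in $C^1$.

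Now suppose, for contradiction, that $\tau$ is a Polish group topology on $G$. Since $G$ is locally moving --- it contains $C^{\infty}$ diffeomorphisms compactly supported in any chart --- the results of Kallman quoted above together with \cite{cohen_kallman_2015a}~Lemma~1.5 show that $\tau$ is the unique Polish group topology on $G$ and that its Borel $\sigma$-algebra is the one $G$ inherits from $\Homeo_+(M)$; as the coordinate maps $f\mapsto f'(x)$ are of Baire class one for the $C^0$ topology, this Borel structure also coincides with the one inherited from $\Diff_+^1(M)$. Because a Baire-measurable homomorphism from a Polish group into a separable metrizable topological group is continuous, the identity maps $(G,\tau)\to\Homeo_+(M)$ and $(G,\tau)\to\Diff_+^1(M)$ are continuous; hence $\tau$ refines the $C^1$-subspace topology, $\ell$ is $\tau$-lower semicontinuous, and every $B_n$ is $\tau$-closed. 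By the Baire category theorem some $B_{n_0}$ is non-meager in $(G,\tau)$, so it has nonempty $\tau$-interior, and Pettis's theorem provides a $\tau$-open neighborhood $W$ of $\id$ with $W\subseteq B_{n_0}B_{n_0}^{-1}\subseteq B_{2n_0}$. In other words, $\ell$ is bounded on a $\tau$-neighborhood of the identity.

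Contradicting this last statement is the heart of the proof, and it is exactly here that the argument must part ways with \cite{cohen_kallman_2015a}: for $\Diff_+^{1+\epsilon}(M)$ and $\Diff_+^{1+Lip}(M)$ the corresponding sublevel sets are compact, so the conclusion there is that $(G,\tau)$ is locally compact and the contradiction comes from that; for $\Diff_+^{1+BV}(M)$ the sublevel sets are only $C^0$-precompact and not closed in $\Homeo_+(M)$, and no local compactness is available. Instead the plan is to exploit the failure of equicompactness: using bump-type diffeomorphisms with pairwise disjoint supports shrinking toward a point $p\in M$, one builds a sequence $(g_m)$ in $G$ with $\ell(g_m)\to\infty$ that must nevertheless converge to $\id$ in \emph{every} Polish group topology on $G$ that refines the $C^1$ topology --- the idea being to control the tails $g_m^{-1}g_{m+1}$ and their conjugates tightly enough that completeness of $\tau$, together with the $C^1$-density in $G$ of the smoother groups $\Diff_+^{k}(M)$ (on which $\tau$ is already pinned down), forces $\tau$-convergence; such a sequence eventually enters $W$, contradicting $\sup_{f\in W}\ell(f)<\infty$. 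I expect the manufacture of a sequence that is simultaneously $\ell$-unbounded and $\tau$-null to be the main obstacle, precisely because $\tau$ is known only abstractly, and this is where the extra flexibility of bounded variation is used; it also explains, dually, why the analogous statement fails for $\Diff_+^{1+AC}(M)$, whose sublevel sets, though equally non-compact, are ``completable'' through the completeness of $L^1$. A possible alternative for this last step, staying closer to the methods of \cite{cohen_kallman_2015a}, is to note that every proper $\tau$-Borel subgroup of $G$ of uncountable index --- for instance $\Diff_+^{1+AC}(M)$, $\Diff_+^{1+Lip}(M)$, or any $\Diff_+^{1+\epsilon}(M)$ --- is necessarily $\tau$-meager, since otherwise Pettis would make it $\tau$-clopen of uncountable index, contradicting separability of $\tau$; one then tries to cover a $\tau$-neighborhood of $\id$, or all of $G$, by countably many translates of such subgroups or of the sets $B_n$, which is again more delicate than in \cite{cohen_kallman_2015a} precisely because the $B_n$ are not compact.
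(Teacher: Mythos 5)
Your setup matches the paper's through the Baire--Pettis step: the sublevel sets $A_n=\{f:V(\log f')\le n\}$ are symmetric, satisfy $A_nA_n\subseteq A_{2n}$, are Borel for any Polish group topology $\tau$ (the paper gets this from the locally-moving lemma, which only forces $\tau$ to refine the compact-open topology, together with a countable rational-partition description of $A_n$; your stronger claim that $\tau$ refines the $C^1$ topology is not needed and would itself require an argument), and then some $A_{n_0}$ is non-meager, so by Pettis $A_{2n_0}$ contains a $\tau$-neighborhood of the identity. The genuine gap is in what comes next. Your primary plan --- manufacture a sequence $(g_m)$ with $\ell(g_m)\to\infty$ that is $\tau$-convergent to the identity --- is exactly the step you leave unconstructed, and it cannot be carried out as described: after Pettis, the only leverage you have over the unknown topology $\tau$ is its separability and the Borelness of the $A_n$; there is no mechanism (completeness of $\tau$ plus $C^1$-density of smoother subgroups does not give one) for certifying that a specific sequence converges in an abstractly given Polish group topology. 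Indeed, the same recipe, applied verbatim to $\Diff_+^{1+AC}(M)$, would have to fail, and nothing in your sketch isolates where BV-but-not-AC behavior enters.

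The paper's contradiction instead comes from separability plus an uncountable uniformly separated family, and this is the ingredient your proposal is missing. Since $A_{2n_0}$ contains an identity neighborhood and $(G,\tau)$ is separable, $G=\bigcup_k A_{2n_0}g_k$ for countably many $g_k$. By the Wiener--Young theorem there is a monotone \emph{singular} function $g$ with $V(g_h-g)=2V(g)$ for \emph{every} translation $h$; from it one builds (Corollary \ref{cor_discrete_net}) an uncountable family $F_h\in CBV(I)$, vanishing at $0$ and $1$, with $V(F_{h_1}-F_{h_2})=4n_0+1$ for all distinct $h_1,h_2$, and lifts it to diffeomorphisms $f_h$ with $\log f_h'-\log f_h'(0)=F_h$. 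Two of these must lie in a common coset $A_{2n_0}g_{k_0}$, so $f_{h_1}\circ f_{h_2}^{-1}\in A_{4n_0}$; but $V\bigl(\log(f_{h_1}\circ f_{h_2}^{-1})'\bigr)=V\bigl((F_{h_1}-F_{h_2})\circ f_{h_2}^{-1}\bigr)=V(F_{h_1}-F_{h_2})=4n_0+1$, since total variation is invariant under composition with an orientation-preserving homeomorphism --- a contradiction. Your closing ``alternative'' paragraph does reach the idea of covering $G$ by countably many translates of the $B_n$, but it stops exactly where the real work begins: identifying why no such countable cover can exist, which for BV (and not for AC, where composition acts continuously in $L^1$, cf.\ Lemma \ref{lemma_sub_continuity}) is supplied by the Wiener--Young rigidity of singular functions under translation. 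Without that (or an equivalent non-separability/discrete-family construction), the proof is incomplete.
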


A summary of the Polishability results of \cite{solecki_1999a}, \cite{cohen_kallman_2015a}, and the present work is diagrammed in Figure 1.

\begin{figure}
\begin{tikzpicture}[scale=0.95,
		polish/.style={draw, thick, color=green, text=black, shape=rectangle, style=solid},
		npolish/.style={draw, thick, color=red, text=black, shape=rectangle, style=dotted}]

		\node[polish] (Homeo) at (0,0) {$\Homeo_+(M)$};
		\node[polish] (HomeoAC) at (0,-1) {$\Homeo_+^{AC}(M)$};
		\node[npolish] (HomeoLip) at (0,-2) {$\Homeo_+^{Lip}(M)$};
		\node[polish] (Diff1) at (0,-3) {$\Diff_+^1(M)$};
		\node[npolish] (Diff1ep) at (-3,-4.5) {\begin{tabular}{r}{\tiny $\epsilon\rightarrow0$} $\nearrow$ \\[.2cm] $\Diff_+^{1+\epsilon}(M)$\end{tabular}};
		\node[npolish] (Diff1BV) at (3,-4) {$\Diff_+^{1+BV}(M)$};
		\node[polish] (Diff1AC) at (3,-5) {$\Diff_+^{1+AC}(M)$};
		\node[npolish] (Diff1Lip) at (0,-6) {$\Diff_+^{1+Lip}(M)$};
		\node[polish] (Diff2) at (0,-7) {$\Diff_+^2(M)$};
		\node (Diff2ep) at (-3,-8.5) {\begin{tabular}{r}{\tiny $\epsilon\rightarrow0$} $\nearrow$ \\[.2cm] $\Diff_+^{2+\epsilon}(M)$\end{tabular}};
		\node (Diff2BV) at (3,-8) {$\Diff_+^{2+BV}(M)$};
		\node[polish] (Diff2AC) at (3,-9) {$\Diff_+^{2+AC}(M)$};
		\node (Diff2Lip) at (0,-10) {$\Diff_+^{2+Lip}(M)$};
		\node[polish] (Diff3) at (0,-11) {$\Diff_+^3(M)$};
		\node (Ellipsis1) at (0,-12) {$\vdots$};
		\node[polish] (Diffk) at (0,-13) {$\Diff_+^k(M)$};
		\node (Diffkep) at (-3,-14.5) {\begin{tabular}{r}{\tiny $\epsilon\rightarrow0$} $\nearrow$ \\[.2cm] $\Diff_+^{k+\epsilon}(M)$\end{tabular}};
		\node (DiffkBV) at (3,-14) {$\Diff_+^{k+BV}(M)$};
		\node[polish] (DiffkAC) at (3,-15) {$\Diff_+^{k+AC}(M)$};
		\node (DiffkLip) at (0,-16) {$\Diff_+^{k+Lip}(M)$};
		\node[polish] (Diffk+1) at (0,-17) {$\Diff_+^{k+1}(M)$};
		\node (Ellipsis2) at (0,-18) {$\vdots$};		
		\node [polish] (DiffInfty) at (0,-19) {$\Diff_+^\infty(M)$};
		
		
		\draw (Homeo) -- (HomeoAC) -- (HomeoLip) -- (Diff1) -- (Diff1BV) -- (Diff1AC) -- (Diff1Lip) -- (Diff2);
		\draw (Diff1) -- (Diff1ep) -- (Diff1Lip);
		\draw (Diff2) -- (Diff2BV) -- (Diff2AC) -- (Diff2Lip) -- (Diff3) -- (Ellipsis1) -- (Diffk);
		\draw (Diff2) -- (Diff2ep) -- (Diff2Lip);
		\draw (Diffk) -- (DiffkBV) -- (DiffkAC) -- (DiffkLip) -- (Diffk+1) -- (Ellipsis2) -- (DiffInfty);
		\draw (Diffk) -- (Diffkep) -- (DiffkLip);
\end{tikzpicture}
\caption{The descending chain of subgroups of $\Homeo_+(M^1)$ which satisfy ``intermediate'' smoothness conditions.  A solid green box enshrines a Polish group; while a dotted red box contains a group with no compatible Polish group topology.  As one proceeds down the chain of Polish groups, the associated topologies become strictly finer.}
\end{figure}
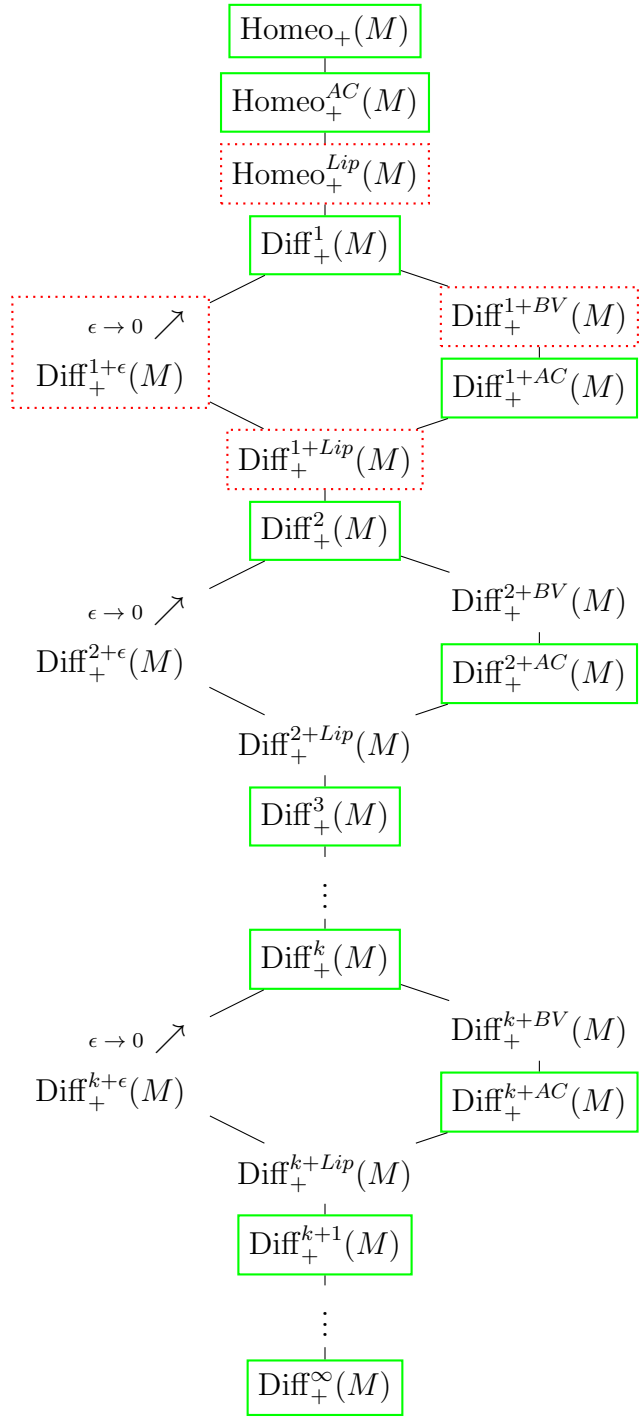

The fact that $\Diff_+^{1+BV}(M)$, $\Diff_+^{1+\epsilon}(M)$, and $\Diff_+^{1+Lip}(M)$ all fail to carry a Polish group topology may come as something of a surprise, since the smoothness classes used to define them ($CBV(M)$, $C^{0,\epsilon}(M)$, and $\mbox{\rm Lip}(M)$ respectively) are all Banach spaces, which suggests a natural assignment of a complete metric by mapping first derivatives into the Banach space where they reside, and taking the norm of the difference there.  (In fact this is essentially our strategy in the $\Diff_+^{k+AC}(M)$ case.)

In the cases of $\Diff_+^{1+\epsilon}(M)$ and $\Diff_+^{1+Lip}(M)$, we know in advance that such a strategy is doomed to fail because a Polish group topology on these groups does not exist.  On the other hand, $\Diff_+^{1+BV}(M)$ is a bit of an isolated case here: \textit{a priori}, our theorem above (non-existence of Polish group topology) does not necessarily imply that the strategy will fail for $\Diff_+^{1+BV}(M)$\textemdash since $CBV(M)$ fails to be separable (see Corollary \ref{cor_discrete_net}) the topology, were it group, would not be Polish anyway.  However, group multiplication does indeed fail to be continuous in this topology.  As an aside, we take the time to prove this (possibly folklore) fact:

\begin{prop}[see Proposition \ref{prop_discontinuity}]  Let $\Diff_+^{1+BV}(M)$ be topologized by the metric\\

\begin{center} $d(f,g)=\displaystyle\sup_{x\in M}d_M(f(x),g(x))+\|\log f'-\log g'\|_{BV}$
\end{center}
\vspace{.3cm}

\noindent where $d_M$ is the metric on $M$ and $\|\cdot\|_{BV}$ is the usual norm on the Banach space $CBV(M)$.  Then for each $g\in\Diff_+^{1+BV}(M)\backslash\Diff_+^{1+AC}(M)$, the left multiplication mapping $f\mapsto g\circ f$ is not continuous.
\end{prop}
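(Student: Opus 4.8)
The plan is to show that the left-multiplication map $L_g\colon f\mapsto g\circ f$ is discontinuous at the identity; since we have not yet established that the topology induced by $d$ is a group topology, this is precisely what must be ruled out. Set $\phi=\log g'$, which lies in $CBV(M)$ but, because $g\notin\Diff_+^{1+AC}(M)$, not in $AC(M)$. By the chain rule $\log(g\circ f)'=\phi\circ f+\log f'$, so
\[\|\log(g\circ f)'-\log g'\|_{BV}\ \geq\ V(\phi\circ f-\phi)-\|\log f'\|_{BV},\]
where $V$ denotes total variation. If $f_n\to\id$ in $d$ then $\|\log f_n'\|_{BV}\to0$, while $\sup_x d_M(f_n(x),x)\to0$ forces $\|\phi\circ f_n-\phi\|_\infty\to0$ by uniform continuity of $\phi$; hence the only way $g\circ f_n$ can stay away from $g=g\circ\id$ is through the total variation term, and it suffices to produce a single sequence $f_n\to\id$ in $d$ with $\liminf_n V(\phi\circ f_n-\phi)>0$.

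The engine is a classical dichotomy for translates of measures. Let $\mu$ be the distributional derivative of $\phi$: a finite, signed, atomless Borel measure whose Lebesgue-singular part $\mu_s$ is nonzero because $\phi\notin AC(M)$; put $s=\|\mu_s\|>0$, and let $\mu_{ac}$ be its absolutely continuous part. Writing $\tau_t$ for translation on $I$ or rotation on $\mathbb{S}^1$, I will use: (i) if a finite signed measure $\nu$ is Lebesgue-singular, then $\|\tau_t\nu-\nu\|_{TV}=2\|\nu\|$ for a.e.\ $t$ — indeed, if $\nu$ is carried by a $\lambda$-null Borel set $N$, Tonelli's theorem gives $\int|\nu|(N\pm t)\,dt=\lambda(N)\|\nu\|=0$, so for a.e.\ $t$ the measures $\nu$ and $\tau_t\nu$ have disjoint carriers; and (ii) if $\nu\ll\lambda$ then $\|\tau_t\nu-\nu\|_{TV}\to0$ as $t\to0$, by continuity of translation in $L^1$.

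On the circle this already finishes: take $f_n$ to be rotation by $t_n$, with $t_n\to0$ chosen via (i) so that $\|\tau_{-t_n}\mu_s-\mu_s\|_{TV}=2s$. Then $\log f_n'\equiv0$, so $f_n\to\id$ in $d$, while the distributional derivative of $\phi\circ f_n-\phi$ is $\tau_{-t_n}\mu-\mu$, giving $V(\phi\circ f_n-\phi)=\|\tau_{-t_n}\mu-\mu\|_{TV}\geq 2s-\|\tau_{-t_n}\mu_{ac}-\mu_{ac}\|_{TV}\to 2s$ by (ii). On the interval one cannot rotate, so instead I transport a fixed interior subinterval: since $\mu_s$ is atomless it puts no mass at the endpoints, hence positive mass $m>0$ on some $[a,b]\subset(0,1)$; fix $\delta>0$ with $[a-\delta,b+\delta]\subset(0,1)$. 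For small $t>0$ let $f_t\in\Diff_+^\infty(I)$ equal $\id$ off $(a-\delta,b+\delta)$, equal $x\mapsto x+t$ on $[a,b]$, and interpolate monotonically on the two buffer intervals; since $f_t'$ differs from $1$ by only $O(t/\delta)$ there, one has $\|f_t-\id\|_\infty\leq t$ and $\|\log f_t'\|_{BV}\leq C_\delta\,t$, so $f_t\to\id$ in $d$. On $[a,b]$ we have $\phi\circ f_t-\phi=\phi(\cdot+t)-\phi(\cdot)$, with distributional derivative $(\tau_{-t}\mu-\mu)|_{(a,b)}$; localizing (i) and (ii) to $(a,b)$ and absorbing the boundary-leakage terms $|\mu_s|((a,a+t])+|\mu_s|([b-t,b))$, which vanish as $t\to0$, yields $V(\phi\circ f_t-\phi)\geq|\tau_{-t}\mu-\mu|((a,b))\geq 2m-o(1)$ along a.e.\ small $t$. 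Picking $t_n\to0$ along such $t$ gives $d(g\circ f_{t_n},g)\geq V(\phi\circ f_{t_n}-\phi)-\|\log f_{t_n}'\|_{BV}\geq m-C_\delta t_n\not\to0$ while $f_{t_n}\to\id$, as required.

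The main obstacle is the interval case: making the translation-discontinuity estimate (i) survive restriction to a proper subinterval. This forces one to track the singular mass of $\mu$ that leaks across the endpoints $a$ and $b$ under a small translation, together with the error contributed by the buffer intervals, where the test maps are not genuine translations — which is why $\delta$ must be fixed in advance, so that $\|\log f_t'\|_{BV}=O(t)$. The reduction to the identity, the chain-rule identity, and the treatment of $\mu_{ac}$ are all routine by comparison.
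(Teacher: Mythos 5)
Your proof is correct, and its overall strategy coincides with the paper's: reduce to discontinuity at the identity via the chain-rule identity $\log(g\circ f)'=(\log g')\circ f+\log f'$, split $\log g'$ into absolutely continuous and singular parts by the Lebesgue decomposition, exploit the fact that a small translation leaves the singular part's variation at full strength (so the difference has variation near twice the singular mass) while the absolutely continuous part's contribution vanishes, and realize the translation by test diffeomorphisms tending to the identity in $d$ (rotations on $\mathbb{S}^1$, near-identity maps translating a fixed subinterval on $I$). The genuine difference lies in the key input and its handling: the paper invokes the Wiener--Young theorem ($V_a^b(\gamma_h-\gamma)=2V_a^b(\gamma)$ for a.e.\ $h$ when $\gamma$ is singular) as a black box and treats the absolutely continuous part via Lemma \ref{lemma_sub_continuity}, whereas you re-derive the needed statement at the level of derivative measures \textemdash{} a Tonelli argument showing a Lebesgue-singular measure is mutually singular with almost every translate of itself \textemdash{} and dispose of the absolutely continuous part by $L^1$-continuity of translation. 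This makes your argument self-contained (and your smooth interpolation on the buffer intervals is more careful than the paper's passing suggestion of a piecewise linear $f_n$, which is not literally $C^1$); the price is a case split between $I$ and $\mathbb{S}^1$ and the bookkeeping of singular mass leaking past the endpoints $a,b$ under translation, which you handle correctly since the leakage terms vanish as $t\to0$ by atomlessness of the derivative measure.
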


In Section 3 we will recall the definition and properties of absolutely continuous maps and present our Polishability results for $\Diff_+^{k+AC}(M)$, and in Section 4 we will review maps of bounded variation and show non-Polishability of $\Diff_+^{1+BV}(M)$.  For background regarding absolute continuity and bounded variation, we found Leoni's book \cite{leoni_2009a} to be a wonderful resource, and we refer the reader there for additional reading.

We remark that historically, the notions of absolute continuity and bounded variation tend to arise in duality.  While functions of bounded variation satisfy certain very weak regularity properties, if one desires stronger conclusions, then the necessary strengthening of the bounded variation hypothesis often turns out to be exactly absolute continuity.  This trend is epitomized by the fact that functions of bounded variation are almost everywhere differentiable but need not satisfy the fundamental theorem of calculus; those functions which do so are precisely the absolutely continuous functions (see Section 3).  Similarly, for a bounded variation function $g$ and its translation $g_h$ by a real distance $h>0$, the total variation of the difference $g_h-g$ is bounded above by twice the variation of $g$, but need not tend to $0$ as $h\rightarrow0$; it turns out this variation tends to $0$ with $h$ if and only if $g$ is absolutely continuous (see \cite{wiener_young_1933a}).  Our Theorems \ref{thm_2} and \ref{thm_1} seem to echo this classical duality: the group $\Diff_+^{1+BV}(M)$ consists of maps which satisfy important and desirable dynamical properties, but the group cannot be given a nice natural topology; by refining to the subgroup $\Diff_+^{1+AC}(M)$ we achieve this goal.

\section{Preliminaries and notation}

For our purposes in this article, the difference between the interval and the circle is mostly inconsequential.  For this reason, we will choose some notational conventions that allow us to blur the distinction, and deal with both cases simultaneously.

We imagine $\mathbb{S}^1$ as the topological quotient space $I/E$, where $E$ is the equivalence relation on $I$ identifying $0$ and $1$.  We define the distance $d_{\mathbb{S}^1}$ in the circle via a natural parametrization, say, $d_{\mathbb{S}^1}([x],[y])=\|e^{2\pi ix}-e^{2\pi iy}\|$ for all (equivalence classes) $[x],[y]\in\mathbb{S}^1=I/E$, where the norm is taken in $\mathbb{C}$.  Whenever $x\in\mathbb{R}$ and $0\leq x\leq 1$, we adopt the habit of identifying $x$ with its equivalence class $[x]$, and understanding $x\in\mathbb{S}^1$ or $x\in I$ interchangeably.

To each homeomorphism $f:\mathbb{S}^1\rightarrow\mathbb{S}^1$ we associate its unique lift map $\tilde{f}:\mathbb{R}\rightarrow\mathbb{R}$, which satisfies: (1) $\tilde{f}(0)\in[0,1)$; (2) $\tilde{f}(x)+1=\tilde{f}(x+1)$ for all $x\in\mathbb{R}$; and (3) $f(x \mod 1)=\tilde{f}(x) \mod 1$ for all $x\in\mathbb{R}$.  We say such a homeomorphism $f$ is of class $C^k$ if and only if $\tilde{f}$ is of class $C^k$.  Inclusion of a circle homeomorphism $f$ in the class $C^k$ necessarily implies that $\tilde{f}^{(j)}(0)=\tilde{f}^{(j)}(1)$ for all $0\leq j\leq k$.  In general we define the derivatives $f^{(j)}(x)=\tilde{f}^{(j)}(x)$ for each $x\in\mathbb{S}^1$, each $f\in\Diff_+^k(\mathbb{S}^1)$, and each $1\leq j\leq k$.  Thus we imagine each $f^{(j)}$ as a member of $C(I)$ which takes the same value at $0$ and $1$.  It is straightforward to check that the usual chain rule, product rule, and inverse rule are still valid when we view the derivatives in this way.

Naturally, we consider a circle diffeomorphism $f$ to be of class $C^{k+AC}$ if $f^{(k)}$ is absolutely continuous, and of class $C^{k+BV}$ if $f^{(k)}$ is continuous and of bounded variation.

From now until the end of this article, unless we specifically indicate otherwise, we allow $M$ to denote either $I$ or $\mathbb{S}^1$.  It is well known that the diffeomorphism group $\Diff_+^k(M)$ is a Polish group when endowed with the metric\\

\begin{center} $d_{C^k}(f,g)=\displaystyle\sup_{x\in M}|d_M(f(x),g(x))|+\sum_{i=1}^k\|f^{(i)}(x)-g^{(i)}(x)\|$,
\end{center}
\vspace{.3cm}

\noindent and we make reference to this metric in several proofs.

\section{Polishability in the absolute continuity case}

Recall that a continuous function $f:I\rightarrow\mathbb{R}$ is said to be \textit{absolutely continuous} if for every $\epsilon>0$, there exists $\delta>0$ such that $\displaystyle\sum_{i=1}^m|f(b_i)-f(a_i)|<\epsilon$ whenever $0\leq a_1<b_1<...<a_n<b_n\leq 1$ satisfy $\displaystyle\sum_{i=1}^m(b_i-a_i)<\delta$.  Every $C^1$ or Lipschitz map is absolutely continuous.

The collection $AC(I)$ of all absolutely continuous real-valued functions on $I$ is closed under pointwise sums, differences, products, and reciprocals provided the function being reciprocated is everywhere positive.  If $u:M\rightarrow M$ is a Lipschitz (or $C^1$) homeomorphism, and $F\in AC(I)$, then $F\circ u\in AC(I)$.  As we mentioned in the introduction, $AC(I)$ consists of exactly those maps which obey the fundamental theorem of calculus; that is, if $F\in AC(I)$ then $F$ is almost everywhere differentiable and

\begin{center} $F(x)=F(0)+\int_0^x F'(x)$ for all $x\in I$.
\end{center}
\vspace{.3cm}

Conversely if a map satisfies the above relation then it is absolutely continuous.

By $\lambda$ we denote the Lebesgue measure on $\mathbb{R}$, or the normalized Lebesgue measure on $\mathbb{S}^1$ or on an interval $[a,b]\subseteq\mathbb{R}$, where the underlying space will be clear from the context.  We denote by $L^1([a,b])$ the space of (equivalence classes up to almost everywhere equality of) $\lambda$-integrable real-valued functions on $[a,b]$.  We denote the standard norm by $\|\cdot\|_{L^1}$, so\\

\begin{center} $\|F\|_{L^1}=\int_a^b|F|$
\end{center}
\vspace{.3cm}

\noindent whenever $F\in L^1([a,b])$.  Any time we employ a norm $\|\cdot\|$ without a subscript, it means the uniform norm applied to a continuous function $F\in C([a,b])$, i.e.\\

\begin{center} $\|F\|=\displaystyle\sup_{x\in[a,b]}|F(x)-x|$.
\end{center}
\vspace{.3cm}

We need the following lemma which can be found in much more general form as Theorem 3.54 and Corollary 3.57 in \cite{leoni_2009a}.

\begin{lemma}[Change of variables]  Let $g\in L^1([c,d])$ and let $u:[a,b]\rightarrow [c,d]$ be an increasing $C^1$ diffeomorphism.  Then $(g\circ u)u'\in L^1([a,b])$, and the change of variables formula holds:  $\int_c^d g=\int_a^b (g\circ u)u'$.
\end{lemma}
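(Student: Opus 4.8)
The plan is to establish the identity first on a dense, tractable class of integrands and then extend by continuity. First I would verify the formula for $g$ continuous on $[c,d]$: in that case $(g\circ u)u'$ is continuous on $[a,b]$, and if $G$ is a $C^1$ antiderivative of $g$ then the chain rule gives $(G\circ u)' = (g\circ u)u'$, so by the fundamental theorem of calculus $\int_a^b (g\circ u)u' = G(u(b))-G(u(a)) = G(d)-G(c) = \int_c^d g$. (Equivalently one could start from $g=\mathbf{1}_{[c,x]}$ and use that $u^{-1}$ carries the interval $[c,x]$ to the interval $[a,u^{-1}(x)]$, but the continuous case is the more convenient base for the density argument below.)

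Second, I would observe that, applying the continuous case to $|g|$ in place of $g$, the assignment $T\colon g\mapsto (g\circ u)u'$ is an isometry from $(C([c,d]),\|\cdot\|_{L^1})$ into $L^1([a,b])$, since $\|Tg\|_{L^1([a,b])}=\int_a^b |g(u(x))|\,u'(x)\,dx=\int_c^d |g|=\|g\|_{L^1([c,d])}$ (using $u'>0$). Moreover the two functionals $g\mapsto\int_c^d g$ and $g\mapsto\int_a^b Tg$ are both bounded in the $L^1([c,d])$-norm on $C([c,d])$ (the second because it is dominated by $\|Tg\|_{L^1}=\|g\|_{L^1}$), and they agree there by Step 1. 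Since $C([c,d])$ is dense in $L^1([c,d])$, $T$ extends uniquely to a linear isometry $\bar T\colon L^1([c,d])\to L^1([a,b])$, and the identity $\int_a^b \bar T g=\int_c^d g$ persists for every $g\in L^1([c,d])$.

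Third — and this is where the only real care is needed — I would identify $\bar T g$ with the pointwise expression $(g\circ u)u'$, and in particular check that the latter is a well-defined element of $L^1([a,b])$. The key observation is that $u^{-1}\colon[c,d]\to[a,b]$ is $C^1$ on a compact interval, hence Lipschitz, hence maps Lebesgue-null sets to Lebesgue-null sets; consequently if $g=\tilde g$ almost everywhere on $[c,d]$ then $g\circ u=\tilde g\circ u$ almost everywhere on $[a,b]$, so $(g\circ u)u'$ depends only on the equivalence class of $g$. To see $\bar T g=(g\circ u)u'$ a.e., pick continuous $g_n\to g$ in $L^1([c,d])$; then $Tg_n=(g_n\circ u)u'\to\bar T g$ in $L^1([a,b])$, so along a subsequence the convergence is a.e.; passing to a further subsequence we may also assume $g_n\to g$ a.e. on $[c,d]$, whence $g_n\circ u\to g\circ u$ a.e. on $[a,b]$ (again since $u^{-1}$ preserves null sets), and therefore $(g_n\circ u)u'\to(g\circ u)u'$ a.e. Matching the two a.e. limits gives $\bar T g=(g\circ u)u'$, so $(g\circ u)u'\in L^1([a,b])$ and $\int_a^b(g\circ u)u'=\int_c^d g$, as desired.

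I expect the argument to be routine; the only point demanding attention is the well-definedness and identification step just described, which rests entirely on the elementary fact that the $C^1$ inverse $u^{-1}$ is Lipschitz and hence null-set preserving. An alternative and equally short route replaces Steps 2--3 by introducing the finite Borel measure $\mu(E)=\int_{u^{-1}(E)}u'\,d\lambda$ on $[c,d]$, noting that $\mu$ agrees with $\lambda$ on all subintervals by Step 1 and hence (by uniqueness of measures determined on the $\pi$-system of intervals) equals $\lambda$, and then reading off the formula from the standard pushforward identity $\int g\,d\mu=\int (g\circ u)u'\,d\lambda$; but this still requires the same null-set bookkeeping to make $(g\circ u)u'$ well defined on $L^1$-equivalence classes.
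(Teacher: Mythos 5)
Your proof is correct. Note, though, that the paper does not prove this lemma at all: it is quoted as a special case of Theorem 3.54 and Corollary 3.57 in Leoni's book, where the change-of-variables formula is established in much greater generality (for monotone, merely absolutely continuous changes of variable, where even the measurability and integrability of $(g\circ u)u'$ is a delicate matter). Your argument is a genuinely different, self-contained route: prove the identity for continuous $g$ by the chain rule and the fundamental theorem of calculus, observe that $g\mapsto (g\circ u)u'$ is an $L^1$-isometry on $C([c,d])$, extend by density, and then identify the abstract extension with the pointwise formula. The identification step is exactly where you need the strong hypothesis of the lemma: since $u$ is a $C^1$ diffeomorphism, $u^{-1}$ is Lipschitz and hence carries null sets to null sets, which makes $(g\circ u)u'$ well defined on $L^1$-equivalence classes, Lebesgue measurable (as an a.e.\ limit of the continuous functions $(g_n\circ u)u'$), and equal a.e.\ to the limit $\bar{T}g$; your subsequence bookkeeping there is sound. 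In short, your approach buys an elementary proof tailored to the case actually used in the paper, at the cost of the generality that the citation to Leoni provides; for the purposes of Lemma 3.2 and its applications (where $u$ is always a $C^1$ diffeomorphism), it is fully adequate.
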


We must adapt the preceding lemma to respect our notational conventions for circle diffeomorphisms.

\begin{lemma}[Change of variables for circle maps] \label{lemma_substitution}  Let $g\in L^1(I)$ and let $u:\mathbb{S}^1\rightarrow\mathbb{S}^1$ be an orientation-preserving $C^1$ diffeomorphism.  Then $(g\circ u)u'\in L^1(I)$, and the change of variables formula holds:\\

\begin{center} $\int_I g=\int_I (g\circ u)u'$.
\end{center}
\end{lemma}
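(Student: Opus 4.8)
The plan is to reduce the circle statement to the interval statement (the ordinary change of variables lemma quoted from Leoni) by cutting the circle at a point and working with the lift. Fix the orientation-preserving $C^1$ diffeomorphism $u:\mathbb{S}^1\rightarrow\mathbb{S}^1$ and recall our conventions: $\tilde{u}:\mathbb{R}\rightarrow\mathbb{R}$ is the lift with $\tilde{u}(0)\in[0,1)$ and $\tilde{u}(x+1)=\tilde{u}(x)+1$, and $u'(x)=\tilde{u}'(x)$ for $x\in I$, so in particular $u'$ is a genuine continuous function on $I$ agreeing at the endpoints. Set $c=\tilde{u}(0)\in[0,1)$. Then $\tilde{u}$ restricts to an increasing $C^1$ diffeomorphism from $[0,1]$ onto $[c,c+1]$.

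First I would verify that $(g\circ u)u'\in L^1(I)$ makes sense and is integrable: since $g\in L^1(I)$ we may extend $g$ to a $1$-periodic locally integrable function $\hat g$ on $\mathbb{R}$ (defined a.e.), and then $g\circ u$ is by definition $\hat g\circ\tilde{u}$ restricted to $[0,1]$; because $\tilde{u}$ is a $C^1$ diffeomorphism onto its image it maps null sets to null sets and pulls null sets back to null sets, so $\hat g\circ\tilde u$ is a well-defined measurable function on $[0,1]$, and boundedness of $u'$ together with the interval change of variables lemma applied on $[0,1]\to[c,c+1]$ gives $(g\circ u)u'=(\hat g\circ\tilde u)\tilde u'\in L^1([0,1])$ with
\[
\int_0^1 (g\circ u)\,u' \;=\; \int_c^{c+1}\hat g.
\]
Next I would observe that, by $1$-periodicity of $\hat g$ and the fact that $[c,c+1]$ is an interval of length $1$, we have $\int_c^{c+1}\hat g=\int_0^1\hat g=\int_I g$: explicitly, split $[c,c+1]=[c,1]\cup[1,c+1]$, and on the second piece substitute $t\mapsto t-1$ and use $\hat g(t)=\hat g(t-1)$ to get $\int_1^{c+1}\hat g=\int_0^c\hat g$, so the two pieces reassemble to $\int_0^c\hat g+\int_c^1\hat g=\int_0^1\hat g$. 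Combining the last two displays yields $\int_I g=\int_I (g\circ u)u'$, which is the claim.

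The only real subtlety — the ``main obstacle'' such as it is — is bookkeeping around the cut point: one must be careful that ``$g\circ u$'' in the statement is interpreted via the lift exactly as in our conventions (so that it is defined a.e.\ even though $u$ wraps around), and that the periodicity argument correctly handles the interval $[c,c+1]$ rather than $[0,1]$. Once the periodic extension $\hat g$ is introduced this is entirely routine, and no measure-theoretic difficulty beyond the quoted interval lemma arises, since $\tilde u'$ is continuous and bounded away from $0$ on the compact interval $[0,1]$. I would also note the degenerate case $c=0$ (i.e.\ $u$ fixes the point $0$), where $\tilde u:[0,1]\to[0,1]$ and the statement is immediate from the interval lemma with no periodicity needed.
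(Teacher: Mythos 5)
Your proof is correct and follows essentially the same route as the paper: reduce to the quoted interval change-of-variables lemma via the lift $\tilde{u}$, and handle the wrap-around by translation invariance (periodicity) of the Lebesgue integral. The only cosmetic difference is that the paper cuts the \emph{domain} at $q=\tilde{u}^{-1}(1)$ and applies the interval lemma to the two monotone pieces $[0,q]\rightarrow[p,1]$ and $[q,1]\rightarrow[1,p+1]$ (using $\int_0^p g=\int_1^{p+1}g$, i.e.\ the periodic extension implicitly), whereas you apply it once to $\tilde{u}:[0,1]\rightarrow[c,c+1]$ with the explicit periodic extension $\hat{g}$ and then shift $\int_c^{c+1}\hat{g}$ back to $\int_I g$; both versions are equally valid.
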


\begin{proof}  Let $\tilde{u}:\mathbb{R}\rightarrow\mathbb{R}$ denote the lift of $u$ with $u(0)\in[0,1)$ and set $p=\tilde{u}(0)$, $q=\tilde{u}^{-1}(1)$.  Then $\tilde{u}|_{[0,q]}:[0,q]\rightarrow[p,1]$ and $\tilde{u}|_{[q,1]}:[q,1]\rightarrow[1,p+1]$ are both increasing $C^1$ diffeomorphisms and thus by the previous lemma we have\\

\begin{align*}
\int_I g &= \int_0^p g+\int_p^1 g\\
&= \int_1^{p+1}g+\int_p^1 g\\
&= \int_q^1 (g\circ \tilde{u})u'+\int_0^q(g\circ\tilde{u})u'\\
&= \int_I (g\circ u)u'.
\end{align*}
\end{proof}

\begin{lemma} \label{lemma_sub_continuity}  Let $g\in L^1(I)$.  Then the mapping $u\mapsto (g\circ u)u'$, $\Diff_+^1(M)\rightarrow L^1(I)$ is continuous.
\end{lemma}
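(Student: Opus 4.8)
The plan is to reduce the continuity of $u\mapsto (g\circ u)u'$ to the case where $g$ is continuous, using a density argument, and then handle the continuous case directly by uniform convergence. The key observation is that Lemma \ref{lemma_substitution} gives an \emph{isometry-like} estimate: for $g\in L^1(I)$ and $u\in\Diff_+^1(M)$, applying the change of variables formula to $|g|$ yields $\|(g\circ u)u'\|_{L^1}=\int_I |g\circ u|\,u' = \int_I |g| = \|g\|_{L^1}$. More generally, for $g_1,g_2\in L^1(I)$ and a \emph{fixed} $u$, we get $\|(g_1\circ u)u'-(g_2\circ u)u'\|_{L^1}=\|((g_1-g_2)\circ u)u'\|_{L^1}=\|g_1-g_2\|_{L^1}$. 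So the operator $T_u\colon L^1(I)\to L^1(I)$ defined by $T_u(g)=(g\circ u)u'$ is a linear isometry for every $u$.

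With this in hand, here is the structure. Fix $g\in L^1(I)$ and a sequence $u_n\to u$ in $\Diff_+^1(M)$; I must show $T_{u_n}(g)\to T_u(g)$ in $L^1(I)$. First I would prove this under the assumption that $g\in C(I)$. In that case $g\circ u_n\to g\circ u$ uniformly (since $u_n\to u$ uniformly and $g$ is uniformly continuous on the compact space $I$, resp.\ via lifts on the circle) and $u_n'\to u'$ uniformly; hence $(g\circ u_n)u_n'\to (g\circ u)u'$ uniformly, which gives $L^1$ convergence since $I$ has finite measure. Then I would pass to general $g\in L^1(I)$ by approximation: given $\epsilon>0$, choose $h\in C(I)$ with $\|g-h\|_{L^1}<\epsilon$ (continuous functions are dense in $L^1$), and estimate
\begin{align*}
\|T_{u_n}(g)-T_u(g)\|_{L^1}
&\leq \|T_{u_n}(g)-T_{u_n}(h)\|_{L^1}+\|T_{u_n}(h)-T_u(h)\|_{L^1}+\|T_u(h)-T_u(g)\|_{L^1}\\
&= \|g-h\|_{L^1}+\|T_{u_n}(h)-T_u(h)\|_{L^1}+\|h-g\|_{L^1}\\
&< 2\epsilon+\|T_{u_n}(h)-T_u(h)\|_{L^1},
\end{align*}
where the first and third terms were controlled by the isometry property. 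Taking $n$ large makes the middle term (the continuous case) less than $\epsilon$, so $\limsup_n\|T_{u_n}(g)-T_u(g)\|_{L^1}\leq 3\epsilon$; since $\epsilon$ was arbitrary, this completes the proof.

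The main technical point to get right is the uniform-convergence claim in the continuous case on the circle: one needs to argue via the lifts $\tilde u_n,\tilde u$, noting that $u_n\to u$ in $\Diff_+^1(\mathbb{S}^1)$ means the lifts converge in $C^1$ on a fundamental domain, and that $g\in C(I)$ viewed as a $1$-periodic-compatible function (equal at $0$ and $1$) composes correctly — but since we are only asserting $L^1$ convergence of $(g\circ u_n)u_n'$, no issues at the basepoint arise, and this is routine. The density of $C(I)$ in $L^1(I)$ and the finiteness of $\lambda$ are the only other ingredients, both standard. I do not anticipate any serious obstacle; the isometry identity coming from Lemma \ref{lemma_substitution} is what makes the whole argument clean.
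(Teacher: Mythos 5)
Your proposal is correct and follows essentially the same route as the paper's proof: approximate $g$ in $L^1$ by a continuous function, handle the continuous case via uniform convergence of $(h\circ u_n)u_n'$, and use the change-of-variables identity from Lemma \ref{lemma_substitution} to control the two approximation errors independently of the diffeomorphism. The only cosmetic difference is that you package the identity as the statement that $T_u$ is a linear isometry, which the paper uses implicitly.
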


\begin{proof}  Fix $u\in\Diff_+^1(M)$, and let $\epsilon>0$.  Let $h$ be a continuous function so that $\|g-h\|_{L^1}<\epsilon/3$.  Using the continuity of $h$, find $\delta$ such that if $v\in\Diff_+^1(M)$ and $\|u'-v'\|<\delta$, then $\|(h\circ u)u'-(h\circ v)v'\|<\epsilon/3$.  For such $v$, we have $\|(h\circ u)u'-(h\circ v)v'\|_{L^1}<\epsilon/3$ as well, and thus applying Lemma \ref{lemma_substitution} we have\\

\begin{align*}
\|(g\circ u)u'-(g\circ v)v'\|_{L^1} &\leq \|(g\circ u)u'-(h\circ u)u'\|_{L^1}+\|(h\circ u)u'-(h\circ v)v'\|_{L^1}\\
& \indent   +\|(h\circ v)v'-(g\circ v)v'\|_{L^1}\\
&\leq \|g-h\|_{L^1}+\|(h\circ u)u'-(h\circ v)v'\|_{L^1}+\|h-g\|_{L^1}\\
&< \epsilon.
\end{align*}
\end{proof}

\begin{lemma} \label{mult_continuity}  The multiplication mapping $(f,a)\mapsto f\cdot a$, $L^1(I)\times C(I)\rightarrow L^1(I)$ is continuous.
\end{lemma}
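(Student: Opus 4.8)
The plan is to treat this as the standard continuity estimate for a bilinear multiplication map, obtained via the triangle inequality after inserting a cross term. First I would fix an arbitrary point $(f_0,a_0)\in L^1(I)\times C(I)$ and a convergent sequence $(f_n,a_n)\to(f_0,a_0)$ in the product topology, so that $\|f_n-f_0\|_{L^1}\to 0$ and $\sup_{x\in I}|a_n(x)-a_0(x)|\to 0$. (The space in question is metrizable, so it suffices to argue with sequences.) I would first observe that the pointwise product $f\cdot a$ genuinely belongs to $L^1(I)$ whenever $f\in L^1(I)$ and $a\in C(I)$, since $a$ is bounded and measurable and hence $|f\cdot a|\le\bigl(\sup_{x\in I}|a(x)|\bigr)\,|f|$ almost everywhere; in particular $\|g\cdot b\|_{L^1}\le\bigl(\sup_{x\in I}|b(x)|\bigr)\,\|g\|_{L^1}$ for every $g\in L^1(I)$ and $b\in C(I)$.

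The key step is the decomposition
\begin{align*}
f_n a_n-f_0 a_0=(f_n-f_0)\,a_n+f_0\,(a_n-a_0),
\end{align*}
which upon taking $L^1$ norms and applying the bound from the previous paragraph gives
\begin{align*}
\|f_n a_n-f_0 a_0\|_{L^1}
&\le\Bigl(\sup_{x\in I}|a_n(x)|\Bigr)\,\|f_n-f_0\|_{L^1}+\|f_0\|_{L^1}\,\sup_{x\in I}|a_n(x)-a_0(x)|.
\end{align*}
Since $\sup_{x\in I}|a_n(x)|\le\sup_{x\in I}|a_0(x)|+\sup_{x\in I}|a_n(x)-a_0(x)|$ stays bounded as $n\to\infty$, the first summand tends to $0$ because $\|f_n-f_0\|_{L^1}\to 0$, while the second summand tends to $0$ because $\|f_0\|_{L^1}$ is a fixed finite constant and $\sup_{x\in I}|a_n(x)-a_0(x)|\to 0$. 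Therefore $\|f_n a_n-f_0 a_0\|_{L^1}\to 0$, which is the desired continuity.

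I do not anticipate any genuine obstacle here: the entire content is the boundedness of $a$ (which both guarantees integrability of the product and supplies the operator bound for multiplication) together with the elementary bilinear trick of perturbing one factor at a time. The one point worth a moment's attention is that the cross term is taken to be $f_0(a_n-a_0)$ rather than $f_n(a_n-a_0)$, so that the coefficient multiplying $\sup_{x\in I}|a_n(x)-a_0(x)|$ is the fixed number $\|f_0\|_{L^1}$ and not the a priori uncontrolled quantity $\|f_n\|_{L^1}$ (although the latter choice would also work, since $\|f_n\|_{L^1}\le\|f_0\|_{L^1}+1$ for all large $n$).
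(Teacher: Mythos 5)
Your argument is correct and is essentially the same as the paper's: both use the decomposition $f_n a_n - f_0 a_0 = (f_n-f_0)a_n + f_0(a_n-a_0)$, bound the first term by the (eventually bounded) sup norm of the second factor times the $L^1$ distance of the first, and bound the second term by $\|f_0\|_{L^1}$ times the uniform distance. The only cosmetic difference is that the paper phrases it as an $\epsilon$--$\delta$ estimate at a fixed point rather than via sequences.
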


\begin{proof}  Fix $f\in L^1(I)$ and $a\in C(I)$.  Let $\epsilon>0$ and choose $\delta>0$ small enough so that $(\|a\|+\delta)\delta+\delta\|f\|_{L^1}<\epsilon$.  Then if $(g,b)\in L^1(I)\times C(I)$ with $\|g-f\|_{L^1},\|b-a\|<\delta$, we have\\

\begin{align*}
\int_I|g\cdot b-f\cdot a| &\leq \int_I|g\cdot b-f\cdot b|+\int_I|f\cdot b-f\cdot a|\\
&= \int_I|g-f||b|+\int_I|f||b-a|\\
&\leq \|b\|\int_I|g-f|+\|b-a\|\int_I|f|\\
&\leq (\|a\|+\delta)\delta+\delta\|f\|_{L^1}<\epsilon.
\end{align*}
\end{proof}

Next we give a technical argument due to Solecki, which we have extracted from his proof of the Polishability of $\Homeo_+^{AC}(I)$ in \cite{solecki_1999a}.  We will apply the argument in a very similar fashion later, to show the convergence of a certain sequence in $L^1$.

\begin{lemma} \label{lemma_solecki}  Let $(f_n)$ be a sequence in $\Diff_+^1(M)$, and let $f_0\in\Diff_+^1(M)$ such that $f_n\rightarrow f_0$ and $f_n'\rightarrow f_0'$ uniformly.  Then for every $\epsilon>0$, there exists $\delta>0$ with the property that whenever $E\subseteq I$ is a Borel set with $\lambda(E)<\delta$ and $(f_{n_i})$ is a subsequence of $(f_n)$, then there is a further subsequence $(f_{n_{i_\ell}})$ with\\

\begin{center} $\lambda\left(f_0(E)\cup\bigcup_{\ell\geq 1}f_{n_{i_\ell}}(E)\right)<\epsilon$.
\end{center}
\end{lemma}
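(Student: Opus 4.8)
The plan is to use the uniform convergence $f_n'\to f_0'$ to produce an absolute-continuity estimate for the images $f_n(E)$ that is uniform in $n$, and then to spend the extra freedom in choosing a subsequence on making the sum of the small measures $\lambda(f_{n_{i_\ell}}(E))$ converge. First I would observe that each $f_n\in\Diff_+^1(M)$ is a $C^1$ diffeomorphism, so by the change-of-variables formula (Lemma \ref{lemma_substitution}, applied to $g=\mathbf{1}_{f_n(E)}$, or directly to intervals) we have $\lambda(f_n(E))=\int_E f_n'\,d\lambda$ for every Borel $E\subseteq I$, and similarly $\lambda(f_0(E))=\int_E f_0'\,d\lambda$. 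Since $f_n'\to f_0'$ uniformly, the family $\{f_n':n\geq 0\}$ is uniformly bounded, say by some constant $C$; hence $\lambda(f_n(E))\leq C\,\lambda(E)$ for all $n\geq 0$ and all Borel $E$. Taking $\delta_0=\epsilon/(3C)$ already guarantees $\lambda(f_0(E))<\epsilon/3$ and $\lambda(f_{n_i}(E))<\epsilon/3$ for every index, once $\lambda(E)<\delta_0$. This handles the ``$f_0(E)$'' term and the individual terms of the union, but of course a union of infinitely many sets of measure $<\epsilon/3$ can still be huge, so the real work is in the subsequence.

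The key step is then to choose the further subsequence $(f_{n_{i_\ell}})$ so rapidly that the images $f_{n_{i_\ell}}(E)$ overlap in a controlled way. The idea is: pick $n_{i_1}=n_{i_1}$ arbitrarily; having chosen $f_{n_{i_1}},\dots,f_{n_{i_{\ell-1}}}$, use uniform convergence $f_{m}'\to f_0'$ to find $N_\ell$ so large that $\|f_m'-f_0'\|<\eta_\ell$ for all $m\geq N_\ell$, where $\eta_\ell>0$ will be chosen in a moment, and set $n_{i_\ell}$ to be an index of the original subsequence $(f_{n_i})$ exceeding $N_\ell$. For such an index, change of variables gives
\begin{align*}
\lambda\bigl(f_{n_{i_\ell}}(E)\bigr) &= \int_E f_{n_{i_\ell}}' \,d\lambda \leq \int_E f_0'\,d\lambda + \eta_\ell\,\lambda(E) \leq \lambda\bigl(f_0(E)\bigr) + \eta_\ell.
\end{align*}
Now I would estimate the measure of the union using subadditivity, but that alone loses the gain; instead the correct move is to bound
\begin{align*}
\lambda\Bigl(f_0(E)\cup\bigcup_{\ell\geq1}f_{n_{i_\ell}}(E)\Bigr) &\leq \lambda\bigl(f_0(E)\bigr) + \sum_{\ell\geq1}\lambda\Bigl(f_{n_{i_\ell}}(E)\setminus f_0(E)\Bigr),
\end{align*}
and to control each tail term $\lambda(f_{n_{i_\ell}}(E)\setminus f_0(E))$ directly. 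Since $f_{n_{i_\ell}}$ and $f_0$ are both increasing diffeomorphisms agreeing up to a uniform-norm error $\to 0$ on values and an error $<\eta_\ell$ on derivatives, I would show $\lambda\bigl(f_{n_{i_\ell}}(E)\,\triangle\, f_0(E)\bigr)$ is small: writing $f_{n_{i_\ell}}(E)\setminus f_0(E)$ and using that $\lambda(f_{n_{i_\ell}}(E))$ and $\lambda(f_0(E))$ differ by at most $\eta_\ell$ together with the fact that $f_{n_{i_\ell}}$ and $f_0$ map $E$ into nearly the same place, one gets a bound of the form $\lambda\bigl(f_{n_{i_\ell}}(E)\setminus f_0(E)\bigr)\leq \eta_\ell + (\text{term going to }0\text{ with }\|f_{n_{i_\ell}}-f_0\|)$; choosing $\eta_\ell$ and the index $n_{i_\ell}$ so that the whole thing is $<\epsilon\,2^{-\ell-1}$ makes $\sum_\ell<\epsilon/2$, and combined with $\lambda(f_0(E))<\epsilon/3<\epsilon/2$ we are done with $\delta=\delta_0$.

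The main obstacle I anticipate is precisely the step of bounding the symmetric difference $\lambda\bigl(f_{n}(E)\,\triangle\, f_0(E)\bigr)$, or rather of extracting a genuinely summable bound: subadditivity is too lossy, so one genuinely needs to exploit that the sets $f_{n_{i_\ell}}(E)$ are all close to the single fixed set $f_0(E)$ and differ from it by small measure. The cleanest route is probably to avoid symmetric differences of the wild sets $f_n(E)$ altogether and instead argue via the integrands: for any Borel $E$ with $\lambda(E)<\delta_0$, on the \emph{union} side one has $\mathbf{1}_{f_0(E)\cup\bigcup_\ell f_{n_{i_\ell}}(E)}\leq \mathbf{1}_{f_0(E)} + \sum_\ell \mathbf{1}_{f_{n_{i_\ell}}(E)\setminus f_0(E)}$ pointwise, so integrating and using $\lambda(A\setminus f_0(E)) = \lambda(A) - \lambda(A\cap f_0(E))$ reduces everything to comparing $\lambda(f_{n_{i_\ell}}(E))$ with $\lambda(f_{n_{i_\ell}}(E)\cap f_0(E))$; since both $f_{n_{i_\ell}}$ and $f_0$ are diffeomorphisms of $M$ that are uniformly close, $f_{n_{i_\ell}}(E)\cap f_0(E)$ contains $f_0(E')$ for a slightly shrunk $E'\subseteq E$ with $\lambda(E\setminus E')$ controlled by $\|f_{n_{i_\ell}}-f_0\|$ and $\|(f_0^{-1})'\|$, which again is forced to $0$ along the subsequence. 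Once this comparison is in hand, the summability is immediate from the $2^{-\ell}$ choices, and the lemma follows; everything else is the routine bookkeeping of a standard diagonal/rapid-subsequence extraction, so I would present those parts tersely.
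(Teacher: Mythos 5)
Your overall architecture is viable, and the quantifier order works in your favor: since the further subsequence may be chosen after $E$ is given, it would indeed suffice to know that for the \emph{fixed} Borel set $E$ one has $\lambda(f_{n_i}(E)\setminus f_0(E))\rightarrow 0$ as $i\rightarrow\infty$, then pick $n_{i_\ell}$ making this quantity less than $\epsilon 2^{-\ell-1}$, and conclude via your subadditivity bound $\lambda\bigl(f_0(E)\cup\bigcup_\ell f_{n_{i_\ell}}(E)\bigr)\leq\lambda(f_0(E))+\sum_\ell\lambda\bigl(f_{n_{i_\ell}}(E)\setminus f_0(E)\bigr)$. The genuine gap is in how you justify that key convergence. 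Both mechanisms you propose assert a bound on $\lambda(f_n(E)\setminus f_0(E))$ \textemdash equivalently on $\lambda(E\setminus E')$ with $E'=E\cap(f_0^{-1}\circ f_n)(E)$ \textemdash controlled only by $\eta_\ell$, $\|f_n-f_0\|$ and norms of $f_0^{-1}$, i.e.\ a modulus uniform in $E$. No such modulus exists: take $f_0=\id$ and let $\phi=f_0^{-1}\circ f_n$ act as a translation by $h$ on a window containing $E$ (on $\mathbb{S}^1$, a rotation by $h$), and let $E$ be a union of intervals of length $h$ separated by gaps of length $h$ inside a window of measure $<\delta$; then $\phi(E)\cap E=\emptyset$, so $\lambda(E\setminus E')=\lambda(E)$ and $\lambda(f_n(E)\setminus f_0(E))$ is comparable to $\lambda(E)$ no matter how small $h=\|f_n-f_0\|$ is. Thus metric proximity of the maps alone yields nothing better than the trivial bound $\approx\lambda(E)<\delta$, which is not summable in $\ell$, and the heart of your argument is unsupported.

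What is true, and what you actually need, is the non-uniform statement for the fixed set $E$: $\lambda(f_n(E)\,\triangle\,f_0(E))=\|\mathbf{1}_E\circ f_n^{-1}-\mathbf{1}_E\circ f_0^{-1}\|_{L^1}\rightarrow0$. Proving it requires exploiting the regularity of Lebesgue measure (the structure of $E$), not just $\|f_n-f_0\|_{C^1}$: for instance, approximate $E$ by a finite union of intervals $A$ with $\lambda(E\triangle A)$ small, use $\lambda(f_n(E)\triangle f_n(A))\leq C\lambda(E\triangle A)$ for all $n\geq0$, and handle $A$ by convergence of the finitely many endpoints; alternatively apply Lemma \ref{lemma_sub_continuity} with $g=\mathbf{1}_E$, $u=f_n^{-1}$, and Lemma \ref{mult_continuity} to divide out $(f_n^{-1})'$. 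With that step supplied, your proof closes and is a legitimate alternative to the paper's argument, which obtains the same control directly from inner/outer regularity: it fixes compacts $K_\ell\subseteq E$ with $\lambda(E\setminus K_\ell)<\delta_\ell$ and an open $U\supseteq f_0(E)$ with $\lambda(U)<\epsilon/3$, uses uniform convergence to trap $f_{n_{i_\ell}}(K_\ell)\subseteq U$, and bounds $\lambda(f_{n_{i_\ell}}(E)\setminus U)<\epsilon_\ell$ with $\sum_\ell\epsilon_\ell<\epsilon/3$; the regularity input you are missing is exactly the engine of that proof as well.
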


\begin{proof}  First check that since $f_n'\rightarrow f_0'$ uniformly, given any $\epsilon>0$, we can find $\delta>0$ with the property that whenever $E\subseteq I$ is a Borel set with $\lambda(E)<\delta$, we will have $\lambda(f_n(E))=\int_E f_n'<\epsilon$ for every $n\geq0$.

Fix $\epsilon>0$.  Choose $\delta$ so that $\lambda(f_0(E))<\epsilon/3$ whenever $E$ is a Borel set with $\lambda(E)<\delta$.  For each $\ell\geq 1$, find $\epsilon_\ell$ such that $\sum_\ell \epsilon_\ell<\epsilon/3$, and choose $\delta_\ell$ such that $\lambda(f_n(E))<\epsilon_\ell$ for each $n$, whenever $E$ is a Borel set with $\lambda(E)<\delta_\ell$.

Now let $E$ be a Borel set with $\lambda(E)<\delta$ and let $(f_{n_i})$ be any subsequence.  Let $(K_\ell)$ be an increasing sequence of compact subsets of $E$ such that $\lambda(E\backslash K_\ell)<\delta_\ell$ for each $\ell$.  Let $U\supseteq f_0(E)$ be an open set with $\lambda(U)<\epsilon/3$.  Since $f_n\rightarrow f_0$ uniformly, we have $f_{n_i}(K_\ell)\subseteq U$ for all sufficiently large $i$.  Passing now to a subsequence $(f_{n_{i_\ell}})$, we have that $f_{n_{i_\ell}}(K_\ell)\subseteq U$.  Thus we have\\

\begin{center}  $\lambda(f_{n_{i_\ell}}(E)\backslash U)\leq \lambda(f_{n_{i_\ell}}(E\backslash K_\ell))<\epsilon_\ell$
\end{center}
\vspace{.3cm}

\noindent and therefore\\

\begin{center}  $\lambda\left(f_0(E)\cup\bigcup_{\ell\geq 1}f_{n_{i_\ell}}(E)\right)\leq \lambda(f_0(E))+\lambda(U)+\sum_{n\geq 1}\lambda(f_{n_{i_\ell}}(E)\backslash U)<\epsilon$.
\end{center}
\end{proof}


We now define a new metric $d$ on $\Diff_+^{k+AC}(M)$ by\\

\begin{align*}
d(f,g) &= \displaystyle\sup_{x\in M}|d_M(f(x),g(x))|+\sum_{i=1}^k\||f^{(i)}(x)-g^{(i)}(x)\|+\int_I|f^{(k+1)}-g^{(k+1)}|\\
&= d_{C^k}(f,g)+\|f^{(k+1)}-g^{(k+1)}\|_{L^1}
\end{align*}
\vspace{.3cm}

\noindent for all $f,g\in\Diff_+^{k+AC}(M)$.  Plainly the topology on $\Diff_+^{k+AC}(M)$ generated by $d$ is strictly finer than that generated by $d_{C^k}$.

\begin{thm} \label{thm_main}  The group $\Diff_+^{k+AC}(M)$, equipped with the metric $d$, is a Polish group.
\end{thm}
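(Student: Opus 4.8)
The plan is to establish two things independently: that $(\Diff_+^{k+AC}(M),d)$ is a Polish space (separable and completely metrizable), and that composition and inversion are $d$-continuous. Closure of $\Diff_+^{k+AC}(M)$ under these operations is already noted in the introduction, so the group axioms are free.

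For the topological part, rather than verifying completeness of $d$ directly, I would exhibit $\Diff_+^{k+AC}(M)$ as a closed subspace of a known Polish space. Consider
\[ \Phi\colon\Diff_+^{k+AC}(M)\longrightarrow\Diff_+^k(M)\times L^1(I),\qquad \Phi(f)=\bigl(f,\,f^{(k+1)}\bigr). \]
Since $d(f,g)=d_{C^k}(f,g)+\|f^{(k+1)}-g^{(k+1)}\|_{L^1}$, the map $\Phi$ is an isometry onto its image for the natural sum metric on the product, hence a homeomorphism onto it. That image equals the set $Z$ of all $(g,\phi)\in\Diff_+^k(M)\times L^1(I)$ with $g^{(k)}(x)=g^{(k)}(0)+\int_0^x\phi$ for every $x\in M$: when $\phi=g^{(k+1)}$ this is the fundamental theorem of calculus, and conversely this identity forces $g^{(k)}\in AC(I)$ (by the characterization of $AC(I)$ recalled above) with $\phi=g^{(k+1)}$ almost everywhere, so $(g,\phi)=\Phi(g)$. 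Since $g\mapsto g^{(k)}$ is continuous $\Diff_+^k(M)\to C(I)$ and $\phi\mapsto\bigl(x\mapsto\int_0^x\phi\bigr)$ is continuous $L^1(I)\to C(I)$, the map $\Psi(g,\phi):=\bigl(x\mapsto g^{(k)}(x)-g^{(k)}(0)-\int_0^x\phi\bigr)$ is continuous into $C(I)$, so $Z=\Psi^{-1}(\{0\})$ is closed. As $\Diff_+^k(M)$ and $L^1(I)$ are Polish, so is their product, so is the closed subspace $Z$, and therefore so is $(\Diff_+^{k+AC}(M),d)$.

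For the group operations, since composition and inversion are already continuous for the $d_{C^k}$ topology (being a topological group), and since, via $\Phi$, a map into $\Diff_+^{k+AC}(M)$ is $d$-continuous precisely when it is $d_{C^k}$-continuous and its $(k+1)$st derivative varies $L^1$-continuously, it suffices to prove that $(f,g)\mapsto(f\circ g)^{(k+1)}$ and $f\mapsto(f^{-1})^{(k+1)}$ are continuous into $L^1(I)$. I would first package Lemmas \ref{lemma_substitution}--\ref{mult_continuity} into the remark that $(\phi,u)\mapsto(\phi\circ u)u'$ is \emph{jointly} continuous $L^1(I)\times\Diff_+^1(M)\to L^1(I)$: the splitting $\|(\psi\circ v)v'-(\phi\circ u)u'\|_{L^1}\le\|\psi-\phi\|_{L^1}+\|(\phi\circ v)v'-(\phi\circ u)u'\|_{L^1}$, with the first term evaluated by the change of variables formula (Lemma \ref{lemma_substitution}), reduces joint continuity to Lemma \ref{lemma_sub_continuity}. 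Now by Fa\`a di Bruno,
\[ (f\circ g)^{(k+1)}=(f^{(k+1)}\circ g)(g')^{k+1}+(f'\circ g)\,g^{(k+1)}+\sum_{\vec m}c_{\vec m}\,(f^{(|\vec m|)}\circ g)\prod_j(g^{(j)})^{m_j}, \]
where in the sum $2\le|\vec m|\le k$ and only $g$-derivatives of order $\le k$ occur; each such summand is a product of continuous functions depending continuously on $(f,g)$ into $C(I)\hookrightarrow L^1(I)$. The first term is $\bigl[(f^{(k+1)}\circ g)g'\bigr]\cdot(g')^{k}$, continuous into $L^1$ by the joint-continuity remark and Lemma \ref{mult_continuity}; the second is $(f'\circ g)\cdot g^{(k+1)}$, continuous by Lemma \ref{mult_continuity}. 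Iterating $(f^{-1})'=1/(f'\circ f^{-1})$ produces an expansion of the same shape for $(f^{-1})^{(k+1)}$: its top term, after using $f'\circ f^{-1}=1/(f^{-1})'$, is $\pm(f^{(k+1)}\circ f^{-1})\bigl((f^{-1})'\bigr)^{k+2}=\bigl[(f^{(k+1)}\circ f^{-1})(f^{-1})'\bigr]\cdot\bigl((f^{-1})'\bigr)^{k+1}$, again of substitution type (using continuity of $f\mapsto f^{-1}$ on $\Diff_+^1(M)$ and the joint-continuity remark), while the remaining summands are rational in the continuous functions $f^{(j)}\circ f^{-1}$ ($1\le j\le k$) with denominators bounded powers of $f'\circ f^{-1}$, hence continuous into $C(I)$.

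I expect the principal difficulty to be bookkeeping rather than analysis: in the Fa\`a di Bruno and iterated inverse-rule expansions one must confirm that the top-order derivative of the argument always enters linearly and carries exactly the power of $g'$ (respectively $(f^{-1})'$) needed to recover the ``$(\phi\circ u)u'$'' pattern after factoring, and that every other summand involves only derivatives of order $\le k$, which lie in $C(I)$ and vary continuously there. All the genuine analytic content is supplied by the change of variables and multiplication lemmas and by the closed-embedding argument of the second paragraph.
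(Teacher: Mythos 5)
Your proposal is correct, and on the one genuinely hard point it takes a different and shorter route than the paper. The paper proves continuity of inversion by reducing to the convergence of $[(f_n')^{-1}f_n^{(k+1)}]\circ f_n^{-1}$ in $L^1$ and then attacking that term with a Solecki-style measure-theoretic argument (Lemma \ref{lemma_solecki}, Egorov's theorem, almost-everywhere convergent subsequences, a diagonal extraction, and the criterion that $L^1$-norm convergence plus a.e.\ convergence gives $L^1$ convergence). You instead observe that, by the inverse rule, $(f')^{-1}\circ f^{-1}=(f^{-1})'$, so the troublesome term is exactly of substitution type $(\phi\circ u)u'$ with $\phi=f^{(k+1)}$, $u=f^{-1}$, and that the substitution operator $(\phi,u)\mapsto(\phi\circ u)u'$ is \emph{jointly} continuous $L^1(I)\times\Diff_+^1(M)\to L^1(I)$: the splitting $\|(\psi\circ v)v'-(\phi\circ u)u'\|_{L^1}\le\|\psi-\phi\|_{L^1}+\|(\phi\circ v)v'-(\phi\circ u)u'\|_{L^1}$, with the first term computed exactly by Lemma \ref{lemma_substitution} and the second handled by Lemma \ref{lemma_sub_continuity}, does give this. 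Since $d$ refines $d_{C^1}$ and $\Diff_+^1(M)$ is a topological group, $f_n^{-1}\to f_0^{-1}$ in $C^1$, so your two-line estimate replaces the paper's entire Claim; the simplification is legitimately available because $k\geq1$ supplies $C^1$ control of the inverses, which is exactly what is absent in Solecki's original $\Homeo_+^{AC}(I)$ setting (where the heavier machinery is genuinely needed, and from which the paper imports it). Two further, smaller divergences: you prove joint continuity of composition directly (both factors varying), so you do not need Montgomery's theorem, whereas the paper proves only separate continuity and invokes Montgomery; and you obtain Polishness by exhibiting the graph $\{(g,\phi):g^{(k)}(x)=g^{(k)}(0)+\int_0^x\phi\}$ as a closed subspace of $\Diff_+^k(M)\times L^1(I)$, which is the paper's completeness computation (the same FTC passage to the limit) repackaged so that separability and complete metrizability come at once. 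The combinatorial facts you leave as bookkeeping (that in the Fa\`a di Bruno and iterated-inverse expansions the top derivative enters linearly with the stated power of $g'$ or $(f^{-1})'$, and all other entries are derivatives of order at most $k$) are asserted at the same level of detail in the paper, so this is not a gap. In short: the paper's argument is more robust and closer to Solecki's, while yours is shorter and exploits the $C^1$ regularity specific to $k\geq1$.
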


\begin{proof}  First we will establish that $\Diff_+^{k+AC}(M)$ is topologically a Polish space, then we will verify that the group operations are continuous.\\

\noindent\textit{Separability.}  Clearly the metric $d$ is chosen so that the map $f\mapsto (f,f^{(k+1}))$, $\Diff_+^{k+AC}(M)\rightarrow \Diff_+^k(M)\times L^1(I)$ is an isometry.  Since the codomain is separable, $(\Diff_+^{k+AC}(I),d)$ is separable.\\

\noindent \textit{Completely metrizability.}  Since $\Diff_+^k(M)$ is Polish, there is a complete metric $\rho_{C^k}$ on $\Diff_+^k(M)$.  Define a metric $\rho$ on $\Diff_+^{k+AC}(M)$ by $\rho(f,g)=\rho_{C^k}(f,g)+\|f^{(k+1)}-g^{(k+1)}\|_{L^1}$.  It is clear that $\rho$ is compatible with $d$, and we claim $\rho$ is complete.  If $(f_n)$ is a $\rho$-Cauchy sequence in $\Diff_+^{k+AC}(M)$, then $(f_n)$ is a Cauchy sequence in $(\Diff_+^k(M),\rho_{C^k})$ and $(f_n^{(k+1)})$ is a Cauchy sequence in $L^1(I)$.  By the completeness of the former group and the latter Banach space, we can find $g\in\Diff_+^k(M)$ such that $f_n\rightarrow g$ in the $\rho_{C^k}$ metric, as well as $a^{k+1}\in L^1(I)$ such that $f_n^{(k+1)}\rightarrow a^{k+1}$ in $L^1$.  These convergences, together with the absolute continuity of $f_n$, imply that for every $x\in I$ we have\\

\begin{align*}
g^{(k)}(x)-g^{(k)}(0) &= \displaystyle\lim_{n\rightarrow\infty} f_n^{(k)}(x)-f_n^{(k)}(0)\\
&= \displaystyle\lim_{n\rightarrow\infty} \int_0^x f_n^{(k+1)}\\
&= \int_0^x a^{k+1}.
\end{align*}
\vspace{.3cm}

\noindent Therefore $(g^{(k)})'=a^{k+1}$ almost everywhere.  So $g^{(k)}$ is absolutely continuous, whence $f_n\rightarrow g\in\Diff_+^{k+AC}(M)$.\\

\noindent\textit{Continuity of inversion.}  By separability, it suffices to show that whenever $(f_n)$ is a sequence in $\Diff_+^{k+AC}(M)$ converging to $f_0\in\Diff_+^{k+AC}(M)$, then we have $f_n^{-1}\rightarrow f_0^{-1}$ (in the $d$ metric).  Since $d$ is stronger than $d_{C^k}$, and $\Diff_+^k(M)$ is already a topological group containing $\Diff_+^{k+AC}(M)$, we have that $d_{C^k}(f_n^{-1},f_0^{-1})\rightarrow0$ already.  So we need to show that $\|(f_n^{-1})^{(k+1)}-(f_0^{-1})^{(k+1)}\|_{L^1}\rightarrow 0$.  By considering the function $\id=f_n\circ f_n^{-1}$, and iterating the inverse rule, chain rule, and product rule (which work everywhere for the $C^1$ functions $f_n,f_n',...,f_n^{(k-1)}$ and almost everywhere for the absolutely continuous function $f_n^{(k)}$), for $n\geq0$ we compute that

\begin{center}  $(f_n^{-1})^{(k+1)}=[(f_n')^{-(2k+1)}\cdot P(f_n',f_n'',...,f_n^{(k)})-(f_n')^{-(k+2)}f_n^{(k+1)}]\circ f_n^{-1}$
\end{center}
\vspace{.3cm}

\noindent almost everywhere, where $P(z_1,...,z_k)$ is some polynomial in $k$ variables, independent of the choice of $f_n$.  Since $f_n^{(i)}\rightarrow f_0^{(i)}$ uniformly for each $1\leq i\leq k$, it is clear that the left-hand term $(f_n')^{-(2k+1)}\cdot P(f_n',f_n'',...,f_n^{(k)})\rightarrow (f_0')^{-(2k+1)}\cdot P(f_0',f_0'',...,f_0^{(k)})$ uniformly as well.  In addition, $f_n\rightarrow f_0$ uniformly implies that $f_n^{-1}\rightarrow f_0^{-1}$ uniformly.  Combining these facts, we see that\\

\begin{center} $[(f_n')^{-2k}\cdot P(f_n',f_n'',...,f_n^{(k)})]\circ f_n^{-1}\rightarrow [(f_0')^{-(2k+1)}\cdot P(f_0',f_0'',...,f_0^{(k)})]\circ f_0^{-1}$ uniformly.
\end{center}
\vspace{.3cm}

This uniform convergence implies convergence in $L^1$.  So to finish our argument, it remains only to see that $[(f_n')^{-(k+2)}f_n^{(k+1)}]\circ f_n^{-1}\rightarrow [(f_0')^{-(k+2)}f_0^{(k+1)}]\circ f_0^{-1}$ in $L^1$.  In turn, since $(f_n')^{-(k+1)}\rightarrow (f_0')^{-(k+1)}$ uniformly, by Lemma \ref{mult_continuity} it suffices to prove:

\begin{claim} $[(f_n')^{-1}f_n^{(k+1)}]\circ f_n^{-1}\rightarrow [(f_0')^{-1}f_0^{(k+1)}]\circ f_0^{-1}$ in $L^1$.
\end{claim}

\begin{proof}[Proof of claim]  We appeal to the following fact which can be found in \cite{rudin_1987a}, Exercise 17(b) with a hint: If $g_n\in L^1(I)$ ($n\geq 0$) satisfy \textit{(i)} $\|g_n\|_{L^1}\rightarrow \|g_0\|_{L^1}$ and \textit{(ii)} $g_n\rightarrow g_0$ pointwise almost everywhere, then $g_n\rightarrow g_0$ in $L^1$.

Taking $g_n=[(f_n')^{-1}f_n^{(k+1)}]\circ f_n^{-1}$, let us prove that conditions (i) and (ii) above hold for some subsequence of $(g_n)$.  For (i), observe that by Lemma \ref{lemma_substitution} (change of variables) we have\\

\begin{align*}
\|g_n\|_{L^1} &= \int_I (|f_n^{(k+1)}|\circ f_n^{-1})\cdot ((f_n')^{-1}\circ f_n^{-1})\\
&= \int_I |f_n^{(k+1)}|\\
&= \|f_n^{(k+1)}\|_{L^1}.
\end{align*}
\vspace{.3cm}

Since $\|f_n^{(k+1)}\|_{L^1}\rightarrow\|f_0^{(k+1)}\|_{L^1}$, so too does $\|g_n\|_{L^1}\rightarrow\|g_0\|_{L^1}$.

Next we show (ii) holds for a subsequence.  Let $\epsilon_1>0$ be arbitrary.  Since $(f_n^{-1})'\rightarrow (f_0^{-1})'$ uniformly, we may associate to $\epsilon_1$ a number $\delta>0$ with the property described in the statement of Lemma \ref{lemma_solecki}.

Since $f_n^{(k+1)}\rightarrow f_0^{(k+1)}$ in $L^1$, there is a subsequence of indices $(n_i)$ such that $(f_{n_i}^{(k+1)})$ converges pointwise almost everywhere to $f_0^{(k+1)}$ (\cite{rudin_1987a} Theorem 3.12).  Consequently $(f_{n_i}')^{-1}f_{n_i}^{(k+1)}\rightarrow(f_0')^{-1}f_0^{(k+1)}$ pointwise almost everywhere.  By Egorov's theorem (\cite{rudin_1987a} Chapter 3 Exercise 16 with a hint), we may find a Borel subset $E\subseteq I$ with $\lambda(E)<\delta$ and such that $(f_{n_i}')^{-1}(f_{n_i}^{(k+1)})$ converges uniformly to $(f_0')^{-1}f_0^{(k+1)}$ on $I\backslash E$.  By our choice of $\delta$ (using Lemma \ref{lemma_solecki}), we may pass to a further subsequence $(n_{i_\ell})$ so that if $A_1=f_0(E)\cup\bigcup_{\ell\geq 1}f_{n_{i_\ell}}(E)$, then $\lambda(A_1)<\epsilon_1$.

Now if $x\notin A_1$, then $f_{n_{i_\ell}}^{-1}(x)\notin E$ for any $\ell\geq0$, whence\\

\begin{center} $g_{n_{i_\ell}}(x)=[(f_{n_{i_\ell}}')^{-1}f_{n_{i_\ell}}^{(k+1)}]\circ f_{n_{i_\ell}}^{-1}(x)\rightarrow[(f_0')^{-1}f_0^{(k+1)}]\circ f_0^{-1}(x)=g_0(x)$.
\end{center}
\vspace{.3cm}

Thus $g_{n_{i_\ell}}$ converges to $g_0$ pointwise on $I\backslash A_1$, where $\lambda(A_1)<\epsilon_1$.

Let us now relabel $(g_{n_{i_\ell}})_{\ell\geq 1}$ as $(g_\ell^1)_{\ell\geq 1}$.  By repeating the argument above with $(g_\ell^1)$ in place of $(g_n)$, we can find a subsequence $(g_\ell^2)$ of $(g_\ell^1)$ which converges to $g_0$ pointwise on $I\backslash A_2$, where $\lambda(A_2)<\epsilon_2$, for arbitrary $\epsilon_2$.

Letting $\epsilon_p\rightarrow 0$ and iterating the argument, we may find sequences $(g_\ell^p)$ for $p\geq 1$, where $(g_\ell^{p+1})$ is a subsequence of $(g_\ell^p)$, and $g_\ell^p\rightarrow g_0$ pointwise on $I\backslash A_p$ with $\lambda(A_p)<\epsilon_p$.  Consider the diagonal sequence $(g_\ell^\ell)$.  If $x\notin\bigcap_p A_p$, then there is a fixed $p$ such that $x\notin A_p$, whence $g_\ell^\ell(x)\rightarrow g_0(x)$, because a tail of $(g_\ell^\ell)$ is a subsequence of $(g_\ell^p)$.  Since $\lambda\left(\bigcap_p A_p\right)=0$, we have that $g_\ell^\ell\rightarrow g_0$ almost everywhere.  This implies that $g^\ell_\ell\rightarrow g_0$ in $L^1$, by the fact mentioned above.

The argument we applied above to the sequence $(g_n)$ applies just as well to any of its subsequences; that is, every subsequence of $(g_n)$ admits a subsequence converging to $g_0$ in $L^1$.  Consequently $g_n\rightarrow g_0$ in $L^1$, completing the proof of the claim.
\end{proof}

Our arguments earlier, together with the claim, show that inversion is continuous with respect to the metric $d$.\\ 

\noindent \textit{Continuity of multiplication.}  Because we have already shown the continuity of inversion, if each right multiplication map $f\mapsto f\circ g$ ($g\in\Diff_+^{k+AC}(M)$) is continuous, then so is every left multiplication map $f\mapsto f^{-1}\mapsto f^{-1}\circ g^{-1}\mapsto (f^{-1}\circ g^{-1})^{-1}=g\circ f$ ($g\in\Diff_+^{k+AC}(M)$).  Applying a classical theorem of Montgomery (\cite{montgomery_1936a}), this suffices to conclude that the group multiplication is jointly continuous, since we have already shown that $\Diff_+^{k+AC}(M)$ is Polish as a topological space.  So we need only check that right multiplication by a fixed element $g$ is continuous.

We must show that if $f_n\rightarrow f_0$ in $\Diff_+^{k+AC}(M)$, then $d(f_n\circ g,f_0\circ g)=d_{C^k}(f_n\circ g,f_0\circ g)+\|(f_n\circ g)^{(k+1)}-(f_0\circ g)^{(k+1)}\|_{L^1}\rightarrow0$.  Since $d$ is stronger than $d_{C^k}$, and $\Diff_+^k(M)$ is already a topological group containing $\Diff_+^{k+AC}(M)$, we have that $d_{C^k}(f_n\circ g,f_0\circ g)\rightarrow0$ already.  So we need to show that $\|(f_n\circ g)^{(k+1)}-(f_0\circ g)^{(k+1)}\|_{L^1}\rightarrow 0$.  By iterating the chain and product rules we compute that $(f_n\circ g)^{(k+1)}$ is equal to

\begin{center} $(f_n^{(k+1)}\circ g)(g')^{k+1}+(f_n'\circ g)g^{(k+1)}+Q(f_n''\circ g, f_n^{(3)}\circ g,...,f_n^{(k)}\circ g,g',g'',...,g^{(k)})$
\end{center}
\vspace{.3cm}

\noindent where $Q(z_2,z_3,...,z_k,w_1,w_2,...,w_k)$ is some polynomial in $2k-1$ variables.  For short, let us refer to this function polynomial on the right-hand side of the expression above as $Q_n$.  Since $f_n^{(i)}\rightarrow f_0^{(i)}$ uniformly for $2\leq i\leq k$, we have $f_n^{(i)}\circ g\rightarrow f_0^{(i)}\circ g$ uniformly as well, and therefore $Q_n\rightarrow Q_0$ uniformly.  Thus $Q_n\rightarrow Q_0$ in $L^1$.  So to finish checking continuity of multiplication, we need to check that $(f_n^{(k+1)}\circ g)(g')^{k+1}\rightarrow (f_0^{(k+1)}\circ g)(g')^{k+1}$ and $(f_n'\circ g)g^{(k+1)}\rightarrow (f_0'\circ g)g^{(k+1)}$ in $L^1$.

By Lemma \ref{mult_continuity}, we have that $(f_n^{(k+1)}\circ g)(g')^{k+1}\rightarrow (f_0^{(k+1)}\circ g)(g')^{k+1}$ in $L^1$ if and only if $(f_n^{(k+1)}\circ g)\cdot g'\rightarrow (f_0^{(k+1)}\circ g)\cdot g'$ in $L^1$.  This convergence holds because by Lemma \ref{lemma_substitution},\\

\begin{align*}
\int_I|(f_n^{(k+1)}\circ g)\cdot g'- (f_0^{(k+1)}\circ g)\cdot g'| &= \int_I (|f_n^{(k+1)}- f_0^{(k+1)}|\circ g)\cdot g'\\
&= \int_I |f_n^{(k+1)}-f_0^{(k+1)}|\rightarrow 0.
\end{align*}
\vspace{.3cm}

For the second convergence, we have\\

\begin{align*}
\int_I|(f_n'\circ g)g^{(k+1)}-(f_0'\circ g)g^{(k+1)}| &= \int_I (|f_n'-f_0'|\circ g)\cdot |g^{(k+1)}|\\
&\leq \|f_n'-f_0'\|\int_I |g^{(k+1)}|\rightarrow 0.
\end{align*}

Thus multiplication on the right is continuous, and $\Diff_+^{k+AC}(M)$ is a Polish group.
\end{proof}


\section{Non-Polishability in the bounded variation case}

A map $F\in C([a,b])$ is said to be of \textit{bounded variation} if $\displaystyle\sup\displaystyle\sum_{i=1}^m|f(x_i)-f(x_{i-1})|$ is finite, where the supremum is taken over all finite partitions $a=x_0<x_1<...<x_m=b$.  We denote by $CBV([a,b])$ the subspace of $C([a,b])$ consisting of continuous maps of bounded variation.

$CBV([a,b])$ is closed under pointwise sums, differences, products, and reciprocals provided the function being reciprocated is everywhere positive.  If $u:[a,b]\rightarrow[a,b]$ is a homeomorphism, and $F\in CBV([a,b])$, then $F\circ u\in CBV([a,b])$.

If $F\in CBV([a,b])$ and $[c,d]\subseteq [a,b]$, we define the \textit{variation} of $F$ over $[c,d]$ to be\\

\begin{center} $V_c^d(F)=\displaystyle\sup\displaystyle\sum_{i=1}^m|F(x_i)-F(x_{i-1})|$
\end{center}
\vspace{.3cm}

\noindent where the supremum is taken over all finite partitions $c=x_0<x_1<...<x_n=d$.  $V_c^d$ returns finite values for every $F\in CBV([a,b])$, and satisfies the following properties:\\

\begin{itemize}
		\item if $e\in [c,d]$ then $V_c^d(F)=V_c^e(F)+V_e^d(F)$;
		\item if $K\in\mathbb{R}$ then $V_c^d(K\cdot F)=|K|\cdot V_c^d(F)$ and $V_c^d(F+K)=V_c^d(F)$; and
		\item if $u:[e,f]\rightarrow [c,d]$ is an increasing homeomorphism then $V_e^f(F\circ u)=V_c^d(F)$.
\end{itemize}
\vspace{.3cm}

If $c=a$ and $d=b$, then we drop subscript and superscript, and call $V(F)$ the \textit{total variation} of $F$.  $CBV([a,b])$ becomes a Banach space when equipped with the norm\\

\begin{center} $\|F\|_{BV}=|F(a)|+V(F)$.
\end{center}
\vspace{.3cm}

\begin{lemma}[Invariance of total variation under composition with circle maps] \label{lemma_var_invar}  Let $F\in CBV(I)$ and let $u:\mathbb{S}^1\rightarrow\mathbb{S}^1$ be an orientation-preserving homeomorphism.  Then $V(F)=V(F\circ u)$.
\end{lemma}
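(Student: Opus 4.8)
The plan is to mimic the proof of Lemma \ref{lemma_substitution}: cut the circle at the point where the lift of $u$ crosses an integer, thereby reducing everything to the behavior of $F$ on two complementary subintervals of $I$, and then invoke the elementary properties of the variation functional $V$ recorded above.

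Concretely, I would let $\tilde u:\mathbb{R}\rightarrow\mathbb{R}$ be the lift of $u$ with $p:=\tilde u(0)\in[0,1)$, and set $q:=\tilde u^{-1}(1)\in(0,1]$, so that $\tilde u$ restricts to increasing homeomorphisms $\tilde u|_{[0,q]}:[0,q]\rightarrow[p,1]$ and $\tilde u|_{[q,1]}:[q,1]\rightarrow[1,p+1]$. Reading off the definition of $F\circ u$ through the quotient $\mathbb{S}^1=I/E$, one checks that $(F\circ u)(x)=F(\tilde u(x))$ for $x\in[0,q]$, while $(F\circ u)(x)=F(\tilde u(x)-1)$ for $x\in[q,1]$; in particular $F\circ u$ restricted to $[0,q]$ is $F$ precomposed with the increasing homeomorphism $\tilde u|_{[0,q]}:[0,q]\rightarrow[p,1]$, and restricted to $[q,1]$ it is $F$ precomposed with the increasing homeomorphism $x\mapsto\tilde u(x)-1$, $[q,1]\rightarrow[0,p]$.

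Then I would apply the three bulleted properties of $V$ in turn. By additivity of the variation over adjacent intervals, $V(F\circ u)=V_0^q(F\circ u)+V_q^1(F\circ u)$. By invariance of variation under precomposition with an increasing homeomorphism, $V_0^q(F\circ u)=V_p^1(F)$ and $V_q^1(F\circ u)=V_0^p(F)$. Finally, additivity of the variation once more gives $V_p^1(F)+V_0^p(F)=V_0^1(F)=V(F)$, which is precisely the desired identity.

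The only genuine subtlety — and the step I would be most careful about — is the bookkeeping near the cut point $q$: identifying $F\circ u$ on each of the two subintervals with the correct honest composition of $F$ against an increasing homeomorphism onto a subinterval of $I$, since this is exactly where the circle convention (reduction mod $1$) enters. Once that identification is made, invariance of $V$ under monotone reparametrization and additivity of $V$ finish the argument with no estimates required. (If one insists that $F\circ u$ actually lie in $CBV(I)$ rather than merely be of bounded variation on $I$, one should assume $F(0)=F(1)$, as is automatically the case in our intended application to $f^{(k)}$ with $f\in\Diff_+^{k+BV}(\mathbb{S}^1)$; the variation identity itself requires no such hypothesis.)
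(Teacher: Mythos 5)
Your proposal is correct and is essentially the paper's own argument: both cut the circle at $q=\tilde u^{-1}(1)$, identify $F\circ u$ on the two resulting subintervals with $F$ precomposed with increasing homeomorphisms onto $[p,1]$ and $[0,p]$ (the paper shifts the domain to $[q-1,0]$ where you shift the codomain by $1$, which is the same bookkeeping), and then conclude by additivity of $V_c^d$ and its invariance under increasing reparametrization. No gaps; your closing remark about continuity of $F\circ u$ at the cut point is a fair aside but, as you note, irrelevant to the variation identity itself.
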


\begin{proof}  Let $\tilde{u}:\mathbb{R}\rightarrow\mathbb{R}$ denote the lift of $u$ with $u(0)\in[0,1)$ and set $p=\tilde{u}(0)$, $q=\tilde{u}^{-1}(1)$.  Then $\tilde{u}|_{[0,q]}:[0,q]\rightarrow[p,1]$ and $\tilde{u}|_{[q-1,0]}:[q-1,0]\rightarrow[0,p]$ are both increasing homeomorphisms.  Therefore\\

\begin{align*}
V(F) &= V_0^p(F)+V_p^1(F)\\
&= V_{q-1}^0(F\circ\tilde{u})+V_0^q(F\circ\tilde{u})\\
&= V_q^1(F\circ u)+V_0^q(F\circ u)\\
&= V(F\circ u).
\end{align*}
\end{proof}

A non-constant real-valued function $G$ is called \textit{singular} if $G'(x)=0$ almost everywhere.  The Lebesgue decomposition for functions of bounded variation states that every function $G\in CBV(I)$ decomposes uniquely into a sum $G=\alpha+\gamma$, where $\alpha$ is an absolutely continuous function and $\gamma$ is a continuous singular function.

We will require the following lovely theorem of Wiener and Young (\cite{wiener_young_1933a} Sections 2 and 3) for singular functions.

\begin{thm} \label{thm_wiener_young_1}  Let $g:\mathbb{R}\rightarrow \mathbb{R}$ be a singular function of bounded variation.  Let $g_h$ denote the function defined by $g_h(x)=g(x+h)$ ($x,h\in\mathbb{R}$).  Then for any $a<b$, for almost every $h$,\\

\begin{center} $V_a^b(g_h-g)=2V_a^b(g)$.
\end{center}
\end{thm}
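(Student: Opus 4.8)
The plan is to reduce the statement to a direct computation about the variation of $g_h - g$, exploiting the fact that $g$ is singular, hence its total variation is ``concentrated'' on a null set, so that for a typical translate the graphs of $g_h$ and $g$ have essentially disjoint ``moving parts.'' First I would recall the structure of a singular BV function: since $g$ is of bounded variation, its distributional derivative is a finite signed measure $\mu$, and singularity of $g$ means $\mu \perp \lambda$; thus $\mu$ is carried by a Borel set $N$ with $\lambda(N) = 0$. The total variation measure $|\mu|$ satisfies $|\mu|([a,b]) = V_a^b(g)$ (up to the usual caveats at points of non-continuity, which do not occur here since $g$ is continuous). The translate $g_h$ has derivative measure $\mu_{-h}$, the pushforward of $\mu$ under $x \mapsto x - h$, which is carried by $N - h$.

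The key step is to show that for almost every $h$, the sets $N$ and $N-h$ are disjoint up to an $|\mu|$-null and $|\mu_{-h}|$-null set. This follows from a Fubini argument: consider the product measure $|\mu| \times |\mu|$ on $\mathbb{R}^2$ and the ``diagonal-shift'' set $\{(x,y) : x - y = h\}$; since $|\mu|$ is singular (non-atomic, as $g$ is continuous) one shows that for $\lambda$-a.e. $h$ the fiber has $(|\mu|\times|\mu|)$-measure zero, which translates into $|\mu|(N \cap (N-h)) = 0$ for a.e. $h$. Once we know the two derivative measures are mutually singular, the derivative measure of $g_h - g$ is $\mu_{-h} - \mu$, whose total variation measure is $|\mu_{-h}| + |\mu|$ on the relevant set, so
\begin{align*}
V_a^b(g_h - g) &= |\mu_{-h} - \mu|\big([a,b]\big) = |\mu_{-h}|\big([a,b]\big) + |\mu|\big([a,b]\big).
\end{align*}
For the final equality $V_a^b(g_h - g) = 2 V_a^b(g)$ I would note $|\mu_{-h}|([a,b]) = |\mu|([a-h,b-h]) = V_{a-h}^{b-h}(g)$, and then argue that as $h \to 0$ this tends to $V_a^b(g)$ for a.e. choice, or — following Wiener and Young more closely — fix $a<b$ and observe that the statement is really about the variation over the translated window agreeing with the original; a cleaner route is to prove the identity on all of $\mathbb{R}$ first (where $V(g_h - g) = 2V(g)$ is exact once the supports are $|\mu|$-disjoint) and then localize using additivity of variation over intervals together with the fact that, for a.e. $h$, the endpoints $a-h, b-h$ carry no mass.

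The main obstacle I anticipate is making the ``mutual singularity of $N$ and $N-h$ for a.e. $h$'' rigorous while simultaneously controlling the boundary terms so that one genuinely gets $V_a^b$ and not merely $V_{a-h}^{b-h}$; the non-atomicity of $|\mu|$ (continuity of $g$) is exactly what rules out pathological resonances $N = N - h$ for a positive-measure set of $h$, and it is also what guarantees that the exceptional set of $h$ where $a-h$ or $b-h$ is an atom is empty rather than merely null. An alternative, more elementary approach — and the one Wiener and Young actually take — avoids measure theory entirely: approximate $g$ by a step-like singular function supported on finitely many small intervals, estimate $V_a^b(g_h - g)$ from below by $2V_a^b(g) - (\text{error depending on overlap})$ using the triangle inequality on a suitably chosen partition, bound the error in terms of $h$ and the modulus of continuity, and integrate over $h$ to see the average error vanishes; I would present whichever is shorter, but flag that the Fubini route is conceptually transparent and the only real work is the a.e.-disjointness lemma.
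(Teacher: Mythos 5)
The paper never proves this theorem: it is imported from Wiener and Young \cite{wiener_young_1933a} (Sections 2 and 3) and used as a black box, so there is no internal argument to compare yours against and I judge the proposal on its own terms. Your measure-theoretic skeleton is the natural one: write $Dg=\mu$ with $\mu\perp\lambda$, carried by a Lebesgue-null set $N$, and show that for a.e.\ $h$ the derivative measure of $g_h$ is mutually singular with $\mu$, so that $|D(g_h-g)|=|Dg_h|+|Dg|$. Two points there need repair. First, the key a.e.-singularity lemma cannot be proved the way you sketch it: knowing that each fiber $\{x-y=h\}$ is $(|\mu|\times|\mu|)$-null only uses non-atomicity, and non-atomicity is not sufficient (Lebesgue measure on $[0,1]$ is non-atomic, yet its small translates are far from singular to it). The correct Fubini computation integrates in $h$ against \emph{Lebesgue} measure, using $\lambda(N)=0$: $\int_{\mathbb{R}}|\mu|(N+h)\,dh=\int\lambda(x-N)\,d|\mu|(x)=0$, so $|\mu|(N+h)=0$ for a.e.\ $h$, which is exactly mutual singularity of $\mu$ and the derivative measure of $g_h$ (carried by $N-h$). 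Second, a small bookkeeping slip: with $g_h(x)=g(x+h)$ the derivative measure of $g_h$ gives $[a,b]$ the mass $|\mu|([a+h,b+h])=V_{a+h}^{b+h}(g)$, not $V_{a-h}^{b-h}(g)$.

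The genuine gap is the final step, and it cannot be patched in the form you (and the paper's statement) aim for. What mutual singularity yields is $V_a^b(g_h-g)=V_{a+h}^{b+h}(g)+V_a^b(g)$ for a.e.\ $h$; letting $h\to0$ proves only a limiting statement, and ``prove it on $\mathbb{R}$ and localize'' fails because restricting to the fixed window $[a,b]$ reintroduces precisely the mismatch $V_{a+h}^{b+h}(g)$ versus $V_a^b(g)$. Indeed the exact identity for a.e.\ $h$ is false as literally stated: let $g$ be the Cantor function extended by $0$ on $(-\infty,0]$ and $1$ on $[1,\infty)$, and take $[a,b]=[1/4,1/2]$, so $V_a^b(g)=1/2-1/3=1/6$. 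Since $1/4$ lies in the support of the Cantor measure and is not an endpoint of a complementary interval, $g(1/4+h)>1/3$ for every $h>0$, while $g(1/2+h)=1/2$ for $0<h\leq 1/6$; hence for every such $h$, $V_a^b(g_h-g)\leq V_a^b(g_h)+V_a^b(g)=\bigl(1/2-g(1/4+h)\bigr)+1/6<1/3=2V_a^b(g)$, a positive-measure set of failures (for small $h<0$ one overshoots instead). What your ingredients do prove, once you add the observation that $|\mu|$ has no atoms at $a$ and $b$ when $g$ is continuous there (so $V_{a+h}^{b+h}(g)\to V_a^b(g)$), is the corrected, limiting form: $V_a^b(g_h-g)\to 2V_a^b(g)$ as $h\to0$ through a conull set of $h$, with the exact a.e.\ identity holding for the total variation over all of $\mathbb{R}$, where there is no window mismatch. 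That weaker form is what the paper actually needs in Proposition \ref{prop_discontinuity} (a sequence $h_n\to0$ along which the variation is at least $2V-\varepsilon$), so I would rewrite your conclusion in that form rather than chase the stated identity.
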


The theorem above stands in stark contrast to the case of absolutely continuous functions\textemdash see Lemma \ref{lemma_sub_continuity}, bearing in mind that $V_a^b(f)=\int_a^b|f'|$ when $f$ is absolutely continous.  In fact, Wiener and Young were able to modify the construction of the classical Cantor devil's staircase function, to give examples of monotone functions where the relation in the previous theorem holds not just for {\it almost} every $h$, but for {\it every} $h$ (ibid., Section 6).

\begin{thm} \label{thm_wiener_young_2}  There exists a monotone function $g:\mathbb{R}\rightarrow\mathbb{R}$ with $g(x)=0$ for $x\leq 0$ and $g(x)=1$ for $x\geq 1$, such that if $g_h$ denotes the function defined by $g_h(x)=g(x+h)$ ($x\in\mathbb{R}$), then the following identity holds:\\

\begin{center} $V_a^b(g_h-g)=2$ for every $h\in\mathbb{R}$,
\end{center}
\vspace{.3cm}

\noindent whenever $[0,1]$ and $[h,1+h]$ are both contained in $[a,b]$.
\end{thm}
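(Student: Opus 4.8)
The plan is to realize $g$ as the cumulative distribution function of a non-atomic Borel probability measure $\mu$ on $[0,1]$ which is \emph{translation-singular}: $\mu$ and $\tau_h\mu$ are mutually singular for every $h\neq 0$, where $\tau_h\mu$ denotes the measure $A\mapsto\mu(A+h)$. Granting such a $\mu$, set $g(x)=\mu\bigl((-\infty,x]\bigr)$; then $g$ is non-decreasing, it is continuous because $\mu$ is non-atomic, and since $\mathrm{supp}\,\mu\subseteq[0,1]$ it satisfies $g\equiv 0$ on $(-\infty,0]$ and $g\equiv 1$ on $[1,\infty)$. Writing $\mu_h:=\tau_h\mu$, the distributional derivative of $g_h-g$ is the signed measure $\mu_h-\mu$; since $g_h-g$ is continuous and (by the hypothesis on $[a,b]$) both $\mathrm{supp}\,\mu$ and $\mathrm{supp}\,\mu_h$ lie inside $[a,b]$, we get $V_a^b(g_h-g)=\|\mu_h-\mu\|_{\mathrm{TV}}$, the total variation of that signed measure. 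For two probability measures one always has $\|\mu_h-\mu\|_{\mathrm{TV}}\le\|\mu_h\|_{\mathrm{TV}}+\|\mu\|_{\mathrm{TV}}=2$, with equality precisely when $\mu_h\perp\mu$. Hence translation-singularity yields $V_a^b(g_h-g)=2$ for every $h\neq 0$ (at $h=0$ the value is $0$, so that case is tacitly excluded from the statement). So the theorem reduces to exhibiting a single non-atomic, translation-singular probability measure on $[0,1]$.

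To produce $\mu$, I would invoke the classical fact that there exists a perfect set $P\subseteq[0,1]$ that is linearly independent over $\mathbb{Q}$, and take $\mu$ to be any non-atomic Borel probability measure supported on $P$ (one exists because $P$ is a perfect Polish space). The key point is that $P$ meets each of its nontrivial translates in at most one point. Indeed, fix $h\neq 0$ and suppose $x\neq y$ both lie in $P\cap(P+h)$; then $x,x-h,y,y-h\in P$, and $x-(x-h)-y+(y-h)=0$ is a $\mathbb{Q}$-linear relation among these points, so linear independence forces a coincidence among them. As $h\neq 0$ and $x\neq y$, the only possible coincidences are $x=y-h$ or $x-h=y$; but either one makes three of the points an arithmetic progression $z-h,z,z+h\in P$, which satisfies the nontrivial relation $(z-h)-2z+(z+h)=0$, again contradicting independence. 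Hence $|P\cap(P+h)|\le 1$, and symmetrically $|P\cap(P-h)|\le 1$. By non-atomicity $\mu\bigl(P\cap(P\pm h)\bigr)=0$; taking $B:=P\setminus(P-h)$ we get $\mu(B)=1$ while $\mu_h(B)=\mu\bigl((P+h)\setminus P\bigr)=0$, i.e. $\mu_h\perp\mu$, as desired.

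If one prefers a construction that more literally ``modifies the Cantor devil's staircase,'' the perfect $\mathbb{Q}$-independent set $P$ can itself be built by the usual Cantor scheme: one recursively chooses a decreasing sequence of finite unions of closed subintervals of $[0,1]$, indexed by finite binary strings, with mesh tending to $0$; at each stage only finitely many $\mathbb{Q}$-linear relations with bounded coefficients and bounded support are ``endangered,'' and each of these is destroyed by an arbitrarily small perturbation of the finitely many endpoints, so all are avoided at once. The Cantor--Lebesgue-type measure on the resulting set is then the $\mu$ above, and $g$ is its distribution function: a staircase with uncountably many steps, arranged so as to share no step with any of its translates.

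I expect the only genuinely delicate point to be the reduction in the first paragraph --- the identity $V_a^b(g_h-g)=\|\mu_h-\mu\|_{\mathrm{TV}}$ and the criterion that $\|\mu_h-\mu\|_{\mathrm{TV}}=2$ exactly when $\mu_h\perp\mu$. Continuity of $g$ is precisely what makes the former clean, since it rules out atoms at the endpoints of $[a,b]$; the latter is the standard fact that $\|\alpha-\beta\|_{\mathrm{TV}}=\|\alpha\|_{\mathrm{TV}}+\|\beta\|_{\mathrm{TV}}$ for positive measures $\alpha,\beta$ iff $\alpha\perp\beta$. Everything else --- existence of perfect $\mathbb{Q}$-linearly independent sets, existence of non-atomic measures on perfect Polish spaces, and the elementary relation-chasing above --- is classical or routine.
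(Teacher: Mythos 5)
Your argument is correct, but it takes a genuinely different route from the paper, which in fact offers no proof of Theorem \ref{thm_wiener_young_2} at all: it quotes the explicit construction of Wiener and Young \cite{wiener_young_1933a}, who modify the Cantor devil's staircase so that the variation identity holds for \emph{every} $h$. You instead give a soft, self-contained argument: realize $g$ as the distribution function of a non-atomic probability measure $\mu$ carried by a perfect $\mathbb{Q}$-linearly independent set $P\subseteq[0,1]$; observe that $|P\cap(P+h)|\leq 1$ for $h\neq 0$ (your relation-chasing is right: a common pair $x\neq y$ yields either the nontrivial relation $x-(x-h)-y+(y-h)=0$ among four distinct points of $P$, or a three-term arithmetic progression in $P$); conclude $\mu\perp\tau_h\mu$; and then invoke the standard facts that the pointwise variation of a continuous distribution function equals the total variation norm of the underlying signed measure, and that $\|\alpha-\beta\|_{\mathrm{TV}}=\|\alpha\|_{\mathrm{TV}}+\|\beta\|_{\mathrm{TV}}$ exactly when the positive measures $\alpha,\beta$ are mutually singular. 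All of these ingredients are classical and correctly assembled, so the proof stands; what it buys is brevity and independence from the Wiener--Young construction, while their explicit staircase has the advantage of being concrete (and visibly singular, though singularity is not needed for the statement or for its use in this paper). Two small remarks. First, as you note, $h=0$ must be tacitly excluded. Second, under the convention $g_h(x)=g(x+h)$ the shifted function rises on $[-h,1-h]$, so the hypothesis your argument actually uses (and the one the statement intends) is that $[a,b]$ contain $[0,1]\cup[-h,1-h]$, i.e. the supports of both $\mu$ and $\tau_h\mu$; the interval $[h,1+h]$ appearing in the statement matches the opposite convention $g_h(x)=g(x-h)$. This mismatch lies in the statement rather than in your proof, and it is harmless where the theorem is applied (Corollary \ref{cor_discrete_net}), since there the interval of variation is taken large enough to contain both translates.
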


Following an observation of Adams (\cite{adams_1936a}), we can now deduce the following fact, which easily implies that $CBV(I)$ is a non-separable space in its given norm $\|\cdot\|_{BV}$.

\begin{cor} \label{cor_discrete_net}  Let $r>0$ be arbitrary, and let $B_r=\{F\in CBV(I):\|F\|_{BV}\leq r\}$ be the closed $r$-ball in $CBV(I)$.  Then there exists a subinterval $J\subseteq \mathbb{R}$, and an uncountable family $\{F_h\in CBV(I):h\in J\}$ such that\\

\begin{itemize}
		\item $F_h\in B_r$ for each $t\in U$;
		\item $F_h(0)=F_h(1)=0$ for each $t\in U$; and 
		\item $\|F_{h_1}-F_{h_2}\|_{BV}=2r$ for any distinct $h_1,h_2\in I$.
\end{itemize}
\end{cor}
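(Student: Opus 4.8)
The plan is to obtain the $F_h$ as rescaled horizontal translates of a single ``plateau bump'' manufactured from the Wiener--Young function of Theorem \ref{thm_wiener_young_2}, and then to compute both $\|F_h\|_{BV}$ and $\|F_{h_1}-F_{h_2}\|_{BV}$ using only Theorem \ref{thm_wiener_young_2} together with the reparametrisation-invariance and additivity properties of the variation functional $V$. The guiding observation is that, since $\|\cdot\|_{BV}$ satisfies the triangle inequality, the demands ``$F_h\in B_r$'' and ``$\|F_{h_1}-F_{h_2}\|_{BV}=2r$'' can only be met if in fact $\|F_h\|_{BV}=r$ for every $h$ and the variations of the differences add exactly; since $F_h(0)=0$, this means we want $V(F_h)=r$ and $V(F_{h_1}-F_{h_2})=V(F_{h_1})+V(F_{h_2})$. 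That is precisely the behaviour of horizontal translates of a singular monotone function, by Wiener--Young; the one wrinkle is that a bare translate of the devil's staircase $g$ does not vanish at \emph{both} endpoints of $I$, as the corollary requires.

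To repair this, let $g$ be the continuous singular monotone function of Theorem \ref{thm_wiener_young_2} (so $g\equiv 0$ on $(-\infty,0]$ and $g\equiv 1$ on $[1,\infty)$), and put
\[
 b(x)=g(x-1)-g(x-4).
\]
Then $b$ is continuous and of bounded variation, and it is supported in $[1,5]$: it climbs from $0$ to $1$ on $[1,2]$, sits constantly at $1$ on the plateau $[2,4]$, and descends to $0$ on $[4,5]$, so $b$ is singular and $V_1^5(b)=2$. For $h$ ranging over the uncountable interval $J=(0,\tfrac12)$, I would set $F_h(x)=\tfrac r2\,b(6x-h)$ for $x\in I$. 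Since $h\in(0,\tfrac12)$, the endpoints $6\cdot 0-h$ and $6\cdot 1-h$ both fall in the region where $b$ vanishes, while as $x$ runs over $I$ the argument $6x-h$ sweeps an interval containing $[1,5]$; hence $F_h\in CBV(I)$, $F_h(0)=F_h(1)=0$, and (reparametrisation invariance of $V$, plus $b\equiv 0$ off $[1,5]$) $V(F_h)=\tfrac r2\,V_1^5(b)=r$. So $\|F_h\|_{BV}=r$ and $F_h\in B_r$.

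For the pairwise distances, fix distinct $h_1,h_2\in J$ and write $s=h_2-h_1\in(-\tfrac12,\tfrac12)\setminus\{0\}$. After the linear substitution $u=6x$ one has $V(F_{h_1}-F_{h_2})=\tfrac r2\,V_0^6(\Phi)$ with $\Phi(u)=b(u-h_1)-b(u-h_2)$, and expanding $b$ shows $\Phi(u)=\psi(u)-\psi(u-3)$, where $\psi(u)=g(u-h_1-1)-g(u-h_2-1)$ is a horizontal translate of $g_s-g$ (here $g_s(y)=g(y+s)$). Theorem \ref{thm_wiener_young_2}, applied on a window such as $[-1,2]$ containing both $[0,1]$ and $[s,1+s]$, gives $V(g_s-g)=2$; and since $g_s-g$ is supported in a short interval (contained in $[-\tfrac12,\tfrac32]$), the supports of $\psi$ and of $u\mapsto\psi(u-3)$ land in disjoint subintervals of $[0,6]$ (roughly $[\tfrac12,3]$ and $[\tfrac72,6]$). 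Additivity of $V$ over disjointly supported summands then forces $V_0^6(\Phi)=2+2=4$, so $V(F_{h_1}-F_{h_2})=2r$, and since $(F_{h_1}-F_{h_2})(0)=0$ we conclude $\|F_{h_1}-F_{h_2}\|_{BV}=2r$. As the $F_h$ are pairwise at distance $2r>0$, the uncountable family $\{F_h:h\in J\}$ is as desired.

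The step I expect to be the real obstacle is the one flagged above: engineering $b$ so that, at one stroke, every translate $F_h$ vanishes at both endpoints of $I$ \emph{and} the variations of the differences come out to be exactly twice the common variation of the $F_h$ (a naive rescaled copy of $g_h-g$, for instance, produces a family with $\|F_h\|_{BV}=\|F_{h_1}-F_{h_2}\|_{BV}$, which is useless here). The plateau construction $b=g(\,\cdot-1)-g(\,\cdot-4)$ is built precisely to achieve both, but its validity rests on checking that the two embedded copies of $g$ contribute to $b_{h_1}-b_{h_2}$ with disjoint support; this is what pins down the width of the plateau (a gap of $3$ against a translate-difference $g_s-g$ supported in an interval of length $2$) and the admissible range $J=(0,\tfrac12)$, and it is essentially the only place any quantitative estimate is required.
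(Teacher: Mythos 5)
Your construction is correct and is essentially the paper's own argument: a rescaled plateau bump built from the Wiener--Young staircase of Theorem \ref{thm_wiener_young_2}, whose translates each have variation exactly $r$ by monotonicity, while the variation of any difference splits over the two disjointly located rise/fall pieces, each contributing $2\cdot\tfrac r2=r$ by Theorem \ref{thm_wiener_young_2}. The only differences from the paper (translating the argument inside a fixed $b=g(\cdot-1)-g(\cdot-4)$ on $[0,6]$ rather than translating a piecewise-defined bump on $I$, and using $1-g$ instead of a reflected copy of $g$ for the descent) are cosmetic.
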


\begin{proof}  Let $g:\mathbb{R}\rightarrow \mathbb{R}$ denote the monotone function described in Theorem \ref{thm_wiener_young_2}.  Define $F:I\rightarrow\mathbb{R}$ by:

\begin{equation*}
F(x)=\begin{cases}
\frac{1}{2}r\cdot g(6x-1), & \mbox{if $\frac{1}{6}\leq x\leq\frac{1}{3}$};\\
\frac{1}{2}r\cdot g(-6x+5), & \mbox{if $\frac{2}{3}\leq x\leq \frac{5}{6}$};\\
\frac{1}{2}r, & \mbox{if $\frac{1}{3}\leq x\leq\frac{2}{3}$};\\
0, & \mbox{if $x\leq\frac{1}{6}$ or $x\geq\frac{5}{6}$.}
\end{cases}
\end{equation*}
\vspace{.3cm}

Let $J=[-\frac{1}{6},\frac{1}{6}]$.  For each $h\in J$ we let $F_h:I\rightarrow\mathbb{R}$ be defined by $F_h(x)=F(x+h)$ for $x\in[\frac{1}{6}-h,\frac{5}{6}-h]$, and $F_h(x)=0$ otherwise.  Each $F_h$ is nondecreasing on $[0,\frac{1}{2}-h]$ and nonincreasing on $[\frac{1}{2}-h,0]$, and thus we have

\begin{center} $V(F_h)=V(F_0)=|F(\frac{1}{2})-F(0)|+|F(1)-F(\frac{1}{2})|=\frac{1}{2}r+\frac{1}{2}r=r$.
\end{center}
\vspace{.3cm}

Thus $F_h\in B_r$.  On the other hand, if $h_1,h_2\in J$ are distinct, then\\

\begin{align*}
V_0^{1/2}(F_{h_1}-F_{h_2}) &= V_0^{1/2}\left(\frac{1}{2}r\cdot g(6(x+h_1)-1)-g(6(x+h_2)-1)\right)\\
&= \frac{1}{2}r\cdot V_0^{1/2}(g(6(x+h_1)-1)-g(6(x+h_2)-1))\\
&= \frac{1}{2}r\cdot V_{-1}^2(g(x+h_1)-g(x+h_2))\\
&= \frac{1}{2}r\cdot V_{-1+h_2}^{1+h_2}(g_{h_1-h_2}-g)\\
&= \frac{1}{2}r\cdot 2=r.
\end{align*}
\vspace{.3cm}

Similarly we compute $V_{1/2}^2(F_{h_1}-F_{h_2})=r$, and thus $V(F_{h_1}-F_{h_2})=V_{0}^{1/2}(F_{h_1}-F_{h_2})+V_{1/2}^1(F_{h_1}-F_{h_2})=2r$.
\end{proof}

It is a folklore fact that $\Diff_+^1(I)$ is homeomorphic to the Banach space $\{F\in C(I):F(0)=0\}$ via the mapping $f\mapsto\log f'-\log f'(0)$ (we give a sketch of the argument in \cite{cohen_2017a}), while $\Diff_+^1(\mathbb{S}^1)$ surjects onto $\{F\in C(I):F(0)=F(1)=0\}$ via the same mapping, where the fibers mapping to each point are exactly the cosets of the group of rotations in $\Diff_+^1(\mathbb{S}^1)$.

Consider $f\in\Diff_+^1(M)$, and let $J=f'(I)\subseteq(0,\infty)$.  Since $\log$ is Lipschitz when restricted to $J$, it is easy to see from the definition of bounded variation that if $f'\in CBV(I)$ then so too is $\log f'\in CBV(I)$.  Conversely, since $\exp$ is Lipschitz when restricted to $\log f'(I)$, if $\log f'\in CBV(I)$ then $f'=\exp\log f'\in CBV(I)$.  Thus $f\in\Diff_+^{1+BV}(M)$ if and only if $\log f'-\log f'(0)\in CBV(I)$.

In particular, the mapping $f\mapsto \log f'-\log f'(0)$, viewed as a mapping from $\Diff_+^{1+BV}(M)$ to $CBV(I)$, is surjective, and bijective in case $M=I$.  This correspondence seems to suggest a natural choice\footnote{We have used $\|\log f'-\log g'\|_{BV}$ in our choice of metric, where we might alternatively have preferred $\|f'-g'\|_{BV}$ or $\|(\log f'-\log f'(0))-(\log g'-\log g'(0))\|_{BV}$.  These modifications lead to equivalent metrics.} of metric for the group $\Diff_+^{1+BV}(M)$:\\

\begin{center} $d(f,g)=\displaystyle\sup_{x\in M}d_M(f(x),g(x))+\|\log f'-\log g'\|_{BV}$
\end{center}
\vspace{.3cm}

\noindent for $f,g\in\Diff_+^{1+BV}(M)$.  Indeed, the restriction of this metric to $\Diff_+^{1+AC}(M)$ succeeds in inducing a Polish group topology on the latter group, since for absolutely continuous maps $V(f)=\int_I|f'|$ and one can check that this metric is equivalent to the one we studied earlier.  However, we will now observe that this metric does not define a group topology on $\Diff_+^{1+BV}(M)$, because group multiplication fails badly to be continuous when the factors are not of class $C^{1+AC}$.

\begin{prop} \label{prop_discontinuity}  For any $g\in\Diff_+^{1+BV}(M)\backslash\Diff_+^{1+AC}(M)$, the left multiplication map $f\mapsto g\circ f$, $\Diff_+^{1+BV}(M)\rightarrow\Diff_+^{1+BV}(M)$ is not continuous, where $\Diff_+^{1+BV}(M)$ is endowed with the metric $d(f,g)=\displaystyle\sup_{x\in M}d_M(X)+\|\log f'-\log g'\|_{BV}$.
\end{prop}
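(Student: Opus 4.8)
The plan is to exhibit a sequence $f_n\to\id$ in $(\Diff_+^{1+BV}(M),d)$ for which $g\circ f_n\not\to g=g\circ\id$, which shows that left multiplication by $g$ fails to be continuous at the identity. The source of the trouble is the singular part of $\log g'$. Since $g\in\Diff_+^{1+BV}(M)\backslash\Diff_+^{1+AC}(M)$, the function $\log g'$ (viewed as an element of $CBV([0,1])$, using that $\log$ and $\exp$ are locally Lipschitz) is of bounded variation but not absolutely continuous, so its Lebesgue decomposition $\log g'=\alpha+\gamma$ has a genuine continuous singular summand $\gamma$; being non-constant, $V(\gamma)>0$. Since $\gamma$ is continuous, the variation function $x\mapsto V_0^x(\gamma)$ is continuous, so we may fix $0<a<b<1$ with $V_a^b(\gamma)>0$.

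First I would invoke the Wiener--Young theorem (Theorem \ref{thm_wiener_young_1}), applied to a singular bounded-variation extension of $\gamma$ to $\mathbb{R}$ (extend $\gamma$ by constants outside $[0,1]$; this is still singular and of bounded variation): for almost every $h\in\mathbb{R}$ we have $V_a^b(\gamma_h-\gamma)=2V_a^b(\gamma)$, where $\gamma_h(x)=\gamma(x+h)$. Choose a sequence $h_n\to0$ lying in this full-measure set and with $|h_n|<\min(a,1-b)$, so that $x+h_n\in(0,1)$ for all $x\in[a,b]$. Next, by an elementary interpolation, construct smooth diffeomorphisms $f_n$ of $M$ (hence $f_n\in\Diff_+^{1+BV}(M)$) which restrict to the translation $x\mapsto x+h_n$ on $[a,b]$, which equal the identity near the endpoint(s) of $M$ (so $f_n'(0)=1$), and which deviate from a translation only by $O(h_n)$ on small sets, so that $\sup_{x\in M}d_M(f_n(x),x)+V(\log f_n')\to0$; thus $d(f_n,\id)\to0$.

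The computation then runs through the chain rule: $\log(g\circ f_n)'=(\log g')\circ f_n+\log f_n'$, and since $f_n(x)=x+h_n$ on $[a,b]$, on that subinterval $\log(g\circ f_n)'-\log g'=(\alpha_{h_n}-\alpha)+(\gamma_{h_n}-\gamma)+\log f_n'$. Hence
\[
d(g\circ f_n,g)\ \ge\ V\big(\log(g\circ f_n)'-\log g'\big)\ \ge\ V_a^b(\gamma_{h_n}-\gamma)-V_a^b(\alpha_{h_n}-\alpha)-V_a^b(\log f_n').
\]
The first term equals $2V_a^b(\gamma)$ by the choice of $h_n$; the third tends to $0$ by construction of $f_n$; and the second tends to $0$ because $\alpha$ is absolutely continuous, so $V_a^b(\alpha_{h_n}-\alpha)=\int_a^b|\alpha'(\cdot+h_n)-\alpha'|\to0$ by continuity of translation in $L^1$ (the positive-side analogue of the phenomenon exploited in Lemma \ref{lemma_sub_continuity}). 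Therefore $\liminf_n d(g\circ f_n,g)\ge 2V_a^b(\gamma)>0$, while $d(f_n,\id)\to0$, so $f\mapsto g\circ f$ is not continuous.

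The main obstacle is precisely ensuring that the persistent variation supplied by Wiener--Young for the singular part $\gamma$ is not destroyed by the two ``error'' contributions: the variation of $\alpha_{h_n}-\alpha$ from the absolutely continuous part, and the variation of $\log f_n'$ from the interpolation near the boundary. Both are forced to vanish---the former by classical $L^1$-continuity of translation (the exact $AC$/$BV$ duality emphasized in the introduction), the latter by construction. A secondary technical point, resolved by working on a compact subinterval $[a,b]\subset(0,1)$ rather than with rotations, is to sidestep the wrap-around on $\mathbb{S}^1$ and the constraint that interval diffeomorphisms fix $0$ and $1$; this also lets the interval and circle cases be handled in one stroke.
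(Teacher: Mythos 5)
Your proof is correct and follows essentially the same route as the paper: Lebesgue-decompose $\log g'$, apply the Wiener--Young theorem to the singular part, and test left multiplication against a sequence of near-translations $f_n\to\id$, separating off the absolutely continuous and $\log f_n'$ error terms by the triangle inequality for variation. The only differences are cosmetic (you control the term $V_a^b(\alpha_{h_n}-\alpha)$ by $L^1$-continuity of translation where the paper invokes Lemma \ref{lemma_sub_continuity}, and you build the $f_n$ by a single bump-function interpolation covering both $I$ and $\mathbb{S}^1$), so no further changes are needed.
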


\begin{proof}  Let $G=\log g'\in CBV(I)$, and find its Lebesgue decomposition $G=\alpha+\gamma$ where $\alpha$ is absolutely continuous and $\gamma$ is continuous and singular.  Since $G$ is not absolutely continuous, $\gamma$ is not identically zero: let $(p,p+\delta)$ be a subinterval of $I$ where $\gamma|_{(p,p+\delta)}$ is non-constant and $0<\delta<p<p+\delta<1-\delta<1$.  Set $\epsilon=V_\delta^{1-\delta}(\gamma)>0$.

Applying Theorem \ref{thm_wiener_young_1}, let $h_n$ be a sequence of real numbers so that $|h_n|<\delta$, $h_n\rightarrow 0$, and $V_{\delta}^{1-\delta}(\gamma_{h_n}-\gamma)=2V_{\delta}^{1-\delta}(\gamma)$.  (Here, as before, $\gamma_{h_n}$ is defined by $\gamma_{h_n}(x)=\gamma(x+h_n)$ for $x\in\mathbb{R}$.)

For each integer $n\geq 1$ let $f_n$ be an element of $\Diff_+^{1+BV}(M)$ which maps $[p,p+\delta]$ linearly to $[p+h_n,p+\delta+h_n]$, and let us make our choices so that $\displaystyle\sup_{x\in M}d_M(f_n(x),x)\rightarrow 0$, and $\|\log f_n'\|_{BV}\rightarrow 0$.  For instance, if $M=\mathbb{S}^1$ we can take $f_n$ to be the rotation through an angle of $h_n\cdot 2\pi$; if $M=I$ it is easy to construct $f_n$ as a piecewise linear map.  Thus we have $f_n\rightarrow e$ in our chosen topology on $\Diff_+^{1+BV}(I)$.

On the other hand, by Theorem \ref{thm_wiener_young_1}, we have

\begin{align*}
\|\log (g\circ f_n)'-\log g'\|_{BV} &= \|G\circ f_n+\log f_n'-G\|_{BV}\\
&\geq \|\gamma\circ f_n-\gamma\|_{BV}-\|\alpha\circ f_n-\alpha\|_{BV}-\|\log f_n'\|_{BV}\\
&\geq V_{\delta}^{1-\delta}(\gamma\circ f_n-\gamma)-\left[|\alpha(f_n(0))-\alpha(0)|+V(\alpha\circ f_n-\alpha)+\|\log f_n'\|_{BV}\right]\\
&= V_{\delta}^{1-\delta}(\gamma_{h_n}-\gamma)-\left[|\alpha(f_n(0))-\alpha(0)|+\int_I|(\alpha'\circ f_n)\cdot f_n'-\alpha'|+\|\log f_n'\|_{BV}]\right]\\
&\geq 2\epsilon-\left[|\alpha(f_n(0))-\alpha(0)|+\int_I|(\alpha'\circ f_n)\cdot f_n'-\alpha'|+\|\log f_n'\|_{BV}\right].
\end{align*}
\vspace{.3cm}

Using Lemma \ref{lemma_sub_continuity}, and the continuity of $\alpha$, we can take the term being subtracted from $2\epsilon$ in the last line above to be less than $\epsilon$, for large enough $n$.  Therefore $\|g\circ f_n-g\|_{BV}\geq\epsilon$ for sufficiently large $n$, and $g\circ f_n\not\rightarrow g$.
\end{proof}

Lastly we will show that $\Diff_+^{1+BV}(M)$ has no compatible Polish group topology whatsoever.  The argument is based in part on the following key lemma which we present without proof here.  The lemma is stated explicitly and proved for the case $M=I$ as Lemma 2.3 in \cite{cohen_kallman_2015a}; for the case where $M=\mathbb{S}^1$, it may be easily extrapolated from the proof of Theorem 3.3.2 in \cite{hayes_1997a}.

\begin{lemma}  Let $M=I$ or $M=\mathbb{S}^1$.  Let $G$ be a locally moving subgroup of $\Homeo_+(M)$.  If $\tau$ is any Hausdorff group topology on $G$, then $\tau$ contains the compact-open topology.
\end{lemma}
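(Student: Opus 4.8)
The plan is to deduce the stated inclusion from the fact that a \emph{group} topology is determined, near $\id$, by its behavior on orbits. First I would observe that on $\Homeo_+(M)$ the compact--open topology coincides with the topology of uniform convergence and, in turn, with the topology of pointwise convergence: the only nontrivial point is that a uniform $\varepsilon$-neighborhood of $g$ contains a pointwise basic neighborhood, which one gets by choosing a fine partition $0=x_0<\dots<x_n=1$ with $g(x_{i})$ close to $g(x_{i+1})$ and small arcs $U_i\ni g(x_i)$, since an orientation-preserving homeomorphism $f$ with $f(x_i)\in U_i$ is forced by monotonicity to stay near $g$ on each $[x_i,x_{i+1}]$. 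Hence it suffices to show $\tau$ refines the pointwise topology, i.e.\ that each orbit map $\mathrm{ev}_{x_0}\colon (G,\tau)\to M$, $f\mapsto f(x_0)$, is continuous; and since $\tau$ is a group topology and left translations are $\tau$-homeomorphisms, it is enough to check continuity at $\id$ (continuity at an arbitrary $f_0$ follows by writing $\mathrm{ev}_{x_0}=f_0\circ\mathrm{ev}_{x_0}\circ L_{f_0^{-1}}$). For $M=I$ every element of $G$ fixes the endpoints, so I may assume $x_0\in(0,1)$.

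Suppose, for contradiction, $\mathrm{ev}_{x_0}$ is discontinuous at $\id$: there is $\delta>0$ with $(x_0-\delta,x_0+\delta)\subseteq(0,1)$ such that $\{f\in G:|f(x_0)-x_0|\ge\delta\}$ meets every $\tau$-neighborhood of $\id$. Because a finite intersection of $\tau$-neighborhoods of $\id$ is again one, one of $S^{+}=\{f:f(x_0)\ge x_0+\delta\}$ and $S^{-}=\{f:f(x_0)\le x_0-\delta\}$ meets every $\tau$-neighborhood of $\id$; say $S:=S^{+}$ does (the other case is symmetric). Pick a net $(g_V)$, indexed by the $\tau$-neighborhoods $V$ of $\id$ ordered by reverse inclusion, with $g_V\in S\cap V$, so that $g_V\to\id$ in $\tau$.

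Now I invoke local moving twice. Choose $h\in G\setminus\{\id\}$ with $\mathrm{supp}(h)\subseteq(x_0,x_0+\delta/2)$; inside the open set on which $h$ moves points one can find a short interval $J=(p,q)\subseteq(x_0,x_0+\delta/2)$ with $h(\overline J)\cap\overline J=\emptyset$, and then local moving applied to $J$ yields $j\in G\setminus\{\id\}$ with $\mathrm{supp}(j)\subseteq J$. Evaluating at a point $s$ with $j(s)\ne s$: $h(s)\in h(\overline J)$ is disjoint from $\mathrm{supp}(j)$, so $jh(s)=h(s)\ne h(j(s))=hj(s)$; hence $h$ does not commute with $j$. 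On the other hand, for every $g\in S$ one has $\mathrm{supp}(g h g^{-1})=g(\mathrm{supp}(h))\subseteq g([x_0,x_0+\delta/2])=[g(x_0),g(x_0+\delta/2)]\subseteq[x_0+\delta,1]$, which is disjoint from $\mathrm{supp}(j)$; therefore $g h g^{-1}$ commutes with $j$, i.e.\ $g h g^{-1}$ lies in the centralizer $C=\{k\in G:kjk^{-1}j^{-1}=\id\}$ for every $g\in S$. Since $\tau$ is Hausdorff, $\{\id\}$ is $\tau$-closed and $k\mapsto kjk^{-1}j^{-1}$ is $\tau$-continuous, so $C$ is $\tau$-closed. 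But $k\mapsto khk^{-1}=(kh)k^{-1}$ is $\tau$-continuous with value $h$ at $\id$, so $g_V h g_V^{-1}\to h$ in $\tau$; as each $g_V h g_V^{-1}\in C$ and $C$ is closed, $h\in C$, contradicting that $h$ does not commute with $j$. This settles $M=I$.

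The circle case runs the same way once one has localized the displacement: by the finite-intersection argument one fixes a short arc $B_\ast$, far from $x_0$, such that $\{f:f(x_0)\in\overline{B_\ast}\}$ meets every $\tau$-neighborhood of $\id$, and one fixes a short arc $J$ disjoint from $\overline{B_\ast}$ and from $\{x_0\}$. The one genuinely new phenomenon is wrap-around: a circle homeomorphism may carry $J$ around all of $M$, so one splits (again by finite intersection) into the case where $\{g\in S:g(\overline J)\cap\overline J=\emptyset\}$ meets every $\tau$-neighborhood of $\id$, handled verbatim as above with $h$ supported in $J$, and the complementary case, in which $g_V$ must compress the long arc $M\setminus\overline J$ into a short arc near $g_V(x_0)\in\overline{B_\ast}$, far from $x_0$; there one runs the commutator argument with an auxiliary $h$ supported in (a fixed short sub-arc of) $M\setminus\overline J$. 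Alternatively one simply cites \cite{hayes_1997a}, Theorem~3.3.2, and \cite{cohen_kallman_2015a}, Lemma~2.3. I expect the main obstacle to be exactly this: arranging that the conjugates $g_V h g_V^{-1}$ all land in one \emph{fixed} $\tau$-closed set (a centralizer) that misses $h$ — automatic from monotonicity on the interval, but on the circle requiring the bookkeeping above — together with the (routine but essential) fact that, for a locally moving group, faithfulness of the action is detected by non-commutation, which is what lets us build $j$ in the prescribed interval with $hj\ne jh$.
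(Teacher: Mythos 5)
First, a point of comparison: the paper does not prove this lemma at all -- it is stated ``without proof'' with references to \cite{cohen_kallman_2015a} (Lemma 2.3, case $M=I$) and \cite{hayes_1997a} (Theorem 3.3.2, case $M=\mathbb{S}^1$) -- so any complete argument you give does more than the text itself. Your interval argument is sound and is essentially the standard one behind those citations: compact-open $=$ uniform $=$ pointwise on $\Homeo_+(M)$ by monotonicity, reduction of continuity of $\mathrm{ev}_{x_0}$ to continuity at $\id$ by translation, and then the centralizer trick -- choose $h,j$ by local moving with $\mathrm{supp}(j)\subseteq J$, $h(\overline J)\cap\overline J=\emptyset$ (so $hj\neq jh$), observe that every $g$ displacing $x_0$ by at least $\delta$ pushes $\mathrm{supp}(h)$ off $\mathrm{supp}(j)$, so the conjugates $g_Vhg_V^{-1}$ lie in the $\tau$-closed centralizer of $j$ while converging to $h$. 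All the details you give for $M=I$ check out.

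The circle case, however, has a genuine gap. Your ``complementary case'' rests on the claim that if $g(x_0)\in\overline{B_*}$ and $g(\overline J)\cap\overline J\neq\emptyset$ then $g$ must compress the long arc $M\setminus\overline J$ into a short arc near $\overline{B_*}$. That implication is false: such a $g$ may fix $\overline J$ pointwise (or just fix one point of $\overline J$ and carry $\overline J$ to a short arc) while still sending $x_0$ into $\overline{B_*}$; then $g(M\setminus\overline J)$ is again a long arc, nothing is compressed, and there is no fixed short sub-arc of $M\setminus\overline J$ whose $g$-images uniformly avoid a fixed ``interaction zone,'' so the commutator argument has nothing to grab in that case. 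The case analysis can be repaired by localizing at $x_0$ rather than at a far-away $J$: take a small arc $U\ni x_0$ with $\overline U\cap\overline{B_*}=\emptyset$. Your first case (ruling out, near $\id$, elements with $g(\overline U)\cap\overline U=\emptyset$) goes through verbatim; for the remaining $g\in S$ near $\id$, $g(\overline U)$ is a proper closed arc meeting $\overline U$ and containing $g(x_0)\in\overline{B_*}$, hence it contains one of the two complementary gap arcs $\Gamma_1,\Gamma_2$ between $\overline U$ and $\overline{B_*}$; a finite-intersection split fixes one gap $\Gamma$, and then $g^{-1}(\Gamma)\subseteq\overline U$ for a family of inverses still meeting every $\tau$-neighborhood of $\id$ (inversion is $\tau$-continuous), so you can run your centralizer argument with $h,j$ supported in $\Gamma$, whose conjugating images land in $\overline U$, disjoint from $\Gamma$. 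Your alternative -- simply citing \cite{hayes_1997a} and \cite{cohen_kallman_2015a} -- is of course exactly what the paper does, but as a self-contained proof the circle half needs the repair above.
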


\begin{thm} \label{thm_main_2}  There is no Polish group topology on $\Diff_+^{1+BV}(M)$.
\end{thm}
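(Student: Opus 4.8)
The plan is to argue by contradiction. Suppose $\tau$ is a Polish group topology on $G:=\Diff_+^{1+BV}(M)$. Since $G$ is locally moving (given open $U\subseteq M$ there is a nonidentity element of $\Diff_+^\infty(M)\subseteq G$ supported in $U$), the preceding lemma shows $\tau$ refines the compact-open topology $\tau_{co}$ on $G$, which -- $M$ being compact -- is the topology of uniform convergence of the maps. The first step is a Baire-category argument. For $n\geq 1$ put $A_n=\{f\in G:V(\log f')\leq n\}$. Using $\log(f\circ g)'=(\log f')\circ g+\log g'$, the invariance of total variation under composition with orientation-preserving homeomorphisms (the properties of $V$ recorded above together with Lemma \ref{lemma_var_invar}), and $\log(f^{-1})'=-(\log f')\circ f^{-1}$, one checks that every $A_n$ is symmetric, that $A_nA_n\subseteq A_{2n}$, and that $\bigcup_n A_n=G$. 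Each $A_n$ is $\tau_{co}$-Borel -- the map $f\mapsto f'$ is a pointwise limit of $\tau_{co}$-continuous difference quotients, and $F\mapsto V(F)$ is a countable supremum of continuous functions on $C(M)$ -- hence $\tau$-Borel, so it has the Baire property in $(G,\tau)$. Since $(G,\tau)$ is a Baire space, some $A_N$ is nonmeager, and then Pettis's theorem yields a $\tau$-open neighbourhood $U$ of the identity with $U\subseteq A_NA_N^{-1}=A_NA_N\subseteq A_{2N}$; in particular $V(\log f')\leq 2N$ for every $f\in U$.

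The second step produces, inside every such $U$, elements of arbitrarily large variation, by exploiting the Wiener--Young phenomenon. Fix a one-parameter subgroup $\theta\colon\mathbb{R}\to G$, $t\mapsto\theta_t$, chosen so that conjugation by $\theta_t$ translates log-derivatives: for $M=\mathbb{S}^1$ take $\theta_t$ to be rotation by $t$, and for $M=I$ take $\theta_t$ to be the flow of a fixed smooth vector field on $I$ supported in $[1/8,7/8]$ and identically $1$ on $[1/4,3/4]$, so that $\theta_t$ acts as honest translation by $t$ on $[1/3,2/3]$ for $|t|\leq 1/12$. In either case $t\mapsto\theta_t$ is a group homomorphism which is $\tau_{co}$-continuous; since $\tau$ refines $\tau_{co}$ its graph is $\tau$-closed, so by the closed graph theorem for Polish groups it is $\tau$-continuous. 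Hence, for any fixed $g\in G$, joint continuity of multiplication makes $w_t:=\theta_t\,g\,\theta_t^{-1}\,g^{-1}$ a $\tau$-continuous function of $t$ with $w_0=e$; thus $w_t\in U$, and therefore $V(\log w_t')\leq 2N$, for all $t$ in some interval $(-\delta,\delta)$.

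To finish, I would choose $g\in G\setminus\Diff_+^{1+AC}(M)$ -- in the interval case supported in $[5/12,7/12]$, so that conjugation by $\theta_t$ really does translate $\log g'$ for small $t$ -- such that the (continuous, non-constant) singular part $\gamma$ of $\log g'$ in its Lebesgue decomposition $\log g'=\alpha+\gamma$ satisfies $V(\gamma)>N$; such $g$ are easily built from a large multiple of a devil's-staircase-type function, suitably arranged to vanish at the relevant endpoints and then normalized to a genuine diffeomorphism, the absolutely continuous correction term contributing only bounded variation. With this $g$ one computes $\log(\theta_t g\theta_t^{-1})'=(\log g')_{-t}$ for small $t$, hence $\log w_t'=[(\log g')_{-t}-\log g']\circ g^{-1}$, and so $V(\log w_t')=V((\log g')_{-t}-\log g')$ by invariance of variation under composition with the homeomorphism $g^{-1}$. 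Splitting $(\log g')_{-t}-\log g'=(\alpha_{-t}-\alpha)+(\gamma_{-t}-\gamma)$ into absolutely continuous and singular parts, total variation is additive across this decomposition, and Theorem \ref{thm_wiener_young_1} gives $V(\gamma_{-t}-\gamma)=2V(\gamma)$ for almost every $t$; thus $V(\log w_t')\geq 2V(\gamma)>2N$ for almost every $t$, contradicting $V(\log w_t')\leq 2N$ on $(-\delta,\delta)$.

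The step I expect to be the main obstacle is the passage from $\tau_{co}$ to $\tau$. Because $\tau$ may be strictly finer than the compact-open topology, the naive approximating families (which do converge in $\tau_{co}$) need not converge in $\tau$, so Polishness must be invoked in two essential places: once through Baire category and Pettis's theorem to manufacture the variation-bounded neighbourhood $U$, and once through the closed graph theorem to upgrade $\tau_{co}$-continuity of the one-parameter subgroup $\theta$ to $\tau$-continuity. A secondary technical nuisance is the interval case, where the absence of a rotation subgroup forces one to use a compactly-supported flow acting as a genuine translation on the region carrying the singular diffeomorphism $g$.
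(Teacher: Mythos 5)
Your argument is correct, and its second half takes a genuinely different route from the paper. The first half coincides: you introduce the same filtration $A_n=\{f:V(\log f')\leq n\}$, check symmetry and $A_nA_n\subseteq A_{2n}$, verify Borelness via values of $f'$ at rationals, and use Baire category plus Pettis to place a $\tau$-neighborhood $U$ of the identity inside some $A_{2N}$. After that the paper proceeds by separability: it covers the group by countably many translates $A_{2n_0}g_k$ and then invokes Corollary \ref{cor_discrete_net} (built from the strengthened ``every $h$'' Wiener--Young construction, Theorem \ref{thm_wiener_young_2}) to produce an uncountable family that is $(4n_0+1)$-separated in variation, so that pigeonhole puts two of its members in one translate and the quotient violates the variation bound. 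You never use separability or the uncountable family: instead you upgrade a $\tau_{co}$-continuous one-parameter subgroup (rotations, or a compactly supported flow on $I$) to $\tau$-continuity via the closed-graph/open-mapping theorem for Polish groups, and then run, inside the unknown topology, essentially the mechanism of Proposition \ref{prop_discontinuity}: the commutator path $w_t=\theta_tg\theta_t^{-1}g^{-1}$ lies in $U$ for small $t$, while the a.e.\ Wiener--Young theorem (Theorem \ref{thm_wiener_young_1}) applied to the singular part $\gamma$ of $\log g'$, with $g$ chosen so that $V(\gamma)>N$, forces $V(\log w_t')\geq 2V(\gamma)>2N$ for almost every small $t$. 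What each approach buys: the paper needs only the Borel/Lindel\"of structure of a hypothetical Polish topology together with the explicit staircase of Theorem \ref{thm_wiener_young_2}, whereas your route gets by with the weaker a.e.\ statement and no separated net, at the cost of an extra automatic-continuity ingredient and the flow construction in the interval case; it also exposes the obstruction as a dynamical one (conjugation by small translations), tying Theorem \ref{thm_main_2} directly to Proposition \ref{prop_discontinuity}. Two small points to nail down in a write-up: for $M=\mathbb{S}^1$ take $g$ supported in an arc avoiding the basepoint (or argue with the periodic extension) so that Theorem \ref{thm_wiener_young_1} applies without wrap-around, and either justify the additivity $V(\alpha_{-t}-\alpha+\gamma_{-t}-\gamma)=V(\alpha_{-t}-\alpha)+V(\gamma_{-t}-\gamma)$ across the Lebesgue decomposition (mutual singularity of the derivative measures) or, as in Proposition \ref{prop_discontinuity}, use the triangle inequality together with $V(\alpha_{-t}-\alpha)=\int|\alpha'_{-t}-\alpha'|\rightarrow 0$ as $t\rightarrow 0$.
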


\begin{proof}  Consider the sets $A_n=\{f\in\Diff_+^{1+BV}(M): V(\log f')\leq n\}$, for $n\geq1$.  It is easy to see that $A_n=A_n^{-1}$ and $(A_n)^2\subseteq A_{2n}$, and we also have\\

\begin{center} $f\in A_n\leftrightarrow \forall p\geq 1~\forall q_0,q_1,...,q_p\in(\mathbb{Q}\cap I)~\sum_{i=1}^p|\log f'(q_i)-\log f'(q_{i-1})|\leq n$
\end{center}
\vspace{.3cm}

\noindent from which equivalence we can see that $A_n$ is a $G_\delta$ set in $\Diff_+^1(M)$.  This implies $A_n$ is Borel in $\Homeo_+(M)$ because the inclusion $\Diff_+^1(M)\rightarrow\Homeo_+(M)$ is continuous, which in turn implies that $\Diff_+^{1+BV}(M)$ is a Borel subgroup of $\Homeo_+(M)$ since $\Diff_+^{1+BV}(M)=\bigcup_n A_n$.

Assume for the sake of contradiction that $\Diff_+^{1+BV}(M)$ has some topology which makes it into a Polish group; by the previous lemma, the topology on $\Diff_+^{1+BV}$ is at least as fine as the usual subspace topology inherited from $\Homeo_+(M)$, and therefore the Borel sets in $\Homeo_+(M)$ are also Borel in $\Diff_+^{1+BV}(M)$.

In particular each $A_n$ is Borel in $\Diff_+^{1+BV}(M)$.  Since $\Diff_+^{1+BV}(M)=\bigcup_n A_n$, there is some integer $n_0\geq 1$ for which $A_{n_0}$ is non-meager in $\Diff_+^{1+BV}(M)$.  It follows from the Pettis theorem \cite{pettis_1950a} that $A_{2n_0}$ contains a neighborhood of identity in $\Diff_+^{1+BV}(M)$, since $A_{2n_0}\supseteq (A_{n_0})^2=A_{n_0}A_{n_0}^{-1}$.  Therefore there exists a sequence $(g_k)$ in $\Diff_+^{1+BV}(M)$ such that\\

\begin{center} $\Diff_+^{1+BV}(M)=\bigcup_k A_{2n_0}g_k$.
\end{center}
\vspace{.3cm}

By Corollary \ref{cor_discrete_net}, there exists an uncountable family $\{F_h\in CBV(I):h\in J\}$ such that $V(F_{h_1}-F_{h_2})=4n_0+1$ whenever $h_1,h_2\in J$ are distinct.  Let $\{f_h\in\Diff_+^{1+BV}(M):h\in J\}$ be such that $\log f_h'-\log f_h'(0)=F_h$.  Since $J$ is uncountable, there must be some integer $k_0$ and some pair of distinct indices $h_1,h_2\in J$ such that $f_{h_1}$ and $f_{h_2}$ both lie in $A_{2n_0}g_{k_0}$.  Therefore $f_{h_1}\circ f_{h_2}^{-1}= (f_{h_1}\circ g_{k_0}^{-1})\circ(f_{h_2}\circ g_{k_0}^{-1})^{-1} \in A_{2n_0}A_{2n_0}^{-1}\subseteq A_{4n_0}$, and we have\\

\begin{center} $V(\log (f_{h_1}\circ f_{h_2}^{-1})')\leq 4n_0$.
\end{center}
\vspace{.3cm}

On the other hand,

\begin{align*}
V(\log (f_{h_1}\circ f_{h_2}^{-1})') &= V(\log((f_{h_1}'\circ f_{h_2}^{-1})\cdot ((f_{h_2}')^{-1}\cdot f_{h_2}^{-1})))\\
&= V((\log f_{h_1}'-\log f_{h_2}')\circ f_{h_2}^{-1})\\
&= V((F_{h_1}-F_{h_2})\circ f_{h_2}^{-1}+f_{h_1}'(0)+f_{h_2}'(0))\\
&= V(F_{h_1}-F_{h_2})\\
&= 4n_0+1>4n_0,
\end{align*}
\vspace{.3cm}

\noindent a contradiction.
\end{proof}


\begin{thebibliography}{99}

\bibitem{adams_1936a} C. R. Adams,
{\it The Space of Functions of Bounded Variation and Certain General Spaces,} Trans. Amer. Math. Soc. {\bf 40 (3)} (1936), 421--438.

\bibitem{cohen_2017a} M. P. Cohen,
\textit{On the large-scale geometry of diffeomorphism groups of $1$-manifolds}.  To appear in Forum Mathematicum.

\bibitem{cohen_kallman_2015a} M. P. Cohen and R. R. Kallman, 
\textit{$\mbox{\rm PL}_+(I)$ is not a Polish group}.  Ergodic Theory and Dynamical Systems, available on CJO 2015. doi:{\tt 10.1017/etds.2015.13.}

\bibitem{dudley_1961a} R. M. Dudley,
{\it Continuity of homomorphisms}, Duke Math. J., 1961, pp. 587 -- 594.

\bibitem{hayes_1997a} D. Hayes,
{\it Minimality of the Special Linear Groups}, Ph.D. thesis, University of North Texas, 1997.

\bibitem{hjorth_2000a} G. Hjorth,
{\it Classification and orbit equivalence relations}, Mathematical Surveys and Monographs, 75, American Mathematical Society, Providence, R.I., 2000.

\bibitem{ibarlucia_melleray_2016a} T. Ibarlucia and J. Melleray,
{\it Full groups of minimal homeomorphisms and Baire category methods}, Ergodic Theory and Dynamical Systems {\bf (2)} (2017), 550--573.

\bibitem{kallman_1986a} R. R. Kallman,
{\it Uniqness results for homeomorphism groups}, Trans. Amer. Math. Soc. {\bf 295 (1)} (1986), 389--397.

\bibitem{leoni_2009a} S. Leoni,
{\it A First Course in Sobolev Spaces}, Graduate Studies in Mathematcs, 105, American Mathematical Society, Providence, RI, 2009.

\bibitem{mann_2017a} K. Mann,
{\it PL(M) has no Polish group topology}, to appear in Fund. Math.

\bibitem{montgomery_1936a} D. Montgomery,
{\it Continuity in topological groups}, Bull. Amer. Math. Soc. 42 (1936), 879-882.

\bibitem{navas_2011a} A. Navas,
{\it Groups of circle diffeomorphisms}, University of Chicago Press, 2011.

\bibitem{paolini_shelah_2017a} G. Paolini and S. Shelah,
{\it No uncountable Polish group can be a right-angled Artin group}, Axioms {\bf 6(2)} (2017), 13.

\bibitem{pettis_1950a} B. J. Pettis,
{\it On continuity and openness of homomorphisms in topological groups}, Ann. of Math. (2) 52, 1950, 293--308.

\bibitem{rosendal_2005a} C. Rosendal,
{\it On the non-existence of certain group topologies}, Fund. Math. 187 (3), pp. 213--228, 2005.

\bibitem{rudin_1987a} W. Rudin,
{\it Real and Complex Analysis}, McGraw-Hill, 1987.

\bibitem{solecki_1999a} S. Solecki,
{\it Polish group topologies}, Sets and Proofs, eds. S. Cooper, J. Truss, London Math Soc. Lecture Note Ser., 258 (1999), 339--364.

\bibitem{wiener_young_1933a} N. Wiener and R. C. Young,
{\it The Total Variation of $g(x + h) - g(x)$}, Trans. Amer. Math. Soc. {\bf 35 (1)} (1933), 327--340.

\end{thebibliography}
\end{document}